\title{Induced Disjoint Paths and Connected Subgraphs for $H$-Free Graphs\thanks{An extended abstract of this paper will appear in the proceedings of WG 2022~\cite{MPSV22}.}}
\author{Barnaby Martin\inst{2}\orcidID{0000-0002-4642-8614}\and Dani\"el Paulusma\inst{2}\orcidID{0000-0001-5945-9287}  \and Siani Smith\inst{2}\orcidID{0000-0003-0797-0512} \and Erik Jan van Leeuwen\inst{3}\orcidID{0000-0001-5240-7257}}
\authorrunning{B. Martin, D. Paulusma, S. Smith, E.J. van Leeuwen}
\institute{Department of Computer Science,
 Durham University, Durham, UK,
\email{\{barnaby.d.martin,daniel.paulusma,siani.smith\}@durham.ac.uk}
\and
Department of Information and Computing Sciences, 
Utrecht University,\\
The Netherlands,
\email{e.j.vanleeuwen@uu.nl}
}
\newcommand{\ssi}{\subseteq_i}
\newcommand{\NP}{{\sf NP}}
\renewcommand{\P}{{\sf P}}
\newcommand{\FPT}{{\sf FPT}}
\newcommand{\XP}{{\sf XP}}
\newcommand{\Blob}[1]{#1^\circ}
\newcommand{\problemdef}[3]{
	\begin{center}
		\begin{boxedminipage}{1.02\textwidth}
			\textsc{{#1}}\\[1pt]  
			\begin{tabular}{ r p{0.8\textwidth}}
				\textit{~~~~Instance:} & {#2}\\
				\textit{Question:} & {#3}
			\end{tabular}
		\end{boxedminipage}
	\end{center}
}
\begin{document}
\maketitle

\begin{abstract}
Paths $P^1,\ldots,P^k$ in a graph $G=(V,E)$ are mutually induced if any two distinct $P^i$ and $P^j$ have neither common vertices nor adjacent vertices. The {\sc Induced Disjoint Paths} problem is to decide if a graph~$G$ with $k$ pairs of specified vertices $(s_i,t_i)$ contains $k$ mutually induced paths~$P^i$ such that each $P^i$ starts from $s_i$ and ends at $t_i$. This is a classical graph problem that is \NP-complete even for $k=2$. We introduce a natural generalization, {\sc Induced Disjoint Connected Subgraphs}: instead of connecting pairs of terminals, we must connect sets of terminals. We give almost-complete dichotomies of the computational complexity of both problems for $H$-free graphs, that is, graphs that do not contain some fixed graph $H$ as an induced subgraph.  Finally, we give a complete classification of the complexity of the second problem if the number~$k$ of terminal sets is fixed, that is, not part of the input.
\keywords{induced subgraphs \and connectivity \and  $H$-free graph \and complexity dichotomy}
\end{abstract}

\section{Introduction}\label{s-intro}

The well-known {\sc Disjoint Paths} problem is one of the problems in Karp's list of \NP-complete problems. It is to decide if a graph has pairwise vertex-disjoint paths $P^1,\ldots,P^k$ where each $P^i$ connects two pre-specified vertices $s_i$ and $t_i$. Its generalization, {\sc Disjoint Connected Subgraphs}, plays a crucial role in the graph minor theory of Robertson and Seymour. This problem asks for connected subgraphs $D^1,\ldots,D^k$, where each $D^i$ connects a pre-specified set of vertices $Z_i$. In a recent paper~\cite{KMPSV22} we classified, subject to a small number of open cases, the complexity of both these problems for {\it $H$-free} graphs, that is, for graphs that do not contain some fixed graph $H$ as an {\it induced} subgraph.

\medskip
\noindent
{\bf Our Focus.} We consider the {\it induced} variants of {\sc Disjoint Paths} and {\sc Disjoint Connected Subgraphs}. These problems behave  differently. Namely, {\sc  Disjoint Paths} for fixed $k$, or more generally, {\sc Disjoint Connected Subgraphs}, after fixing both $k$ and $\ell=\max\{|Z_1|,\ldots,|Z_k|\}$, is polynomial-time solvable~\cite{RS95}. In contrast, {\sc Induced Disjoint Paths} is \NP-complete even when $k=2$, as shown both by Bienstock~\cite{Bi91} and Fellows~\cite{Fe89}. Just as for the classical problems~\cite{KMPSV22}, we perform a systematic study and focus on $H$-free graphs.
As it turns out, for the restriction to $H$-free graphs, the induced variants actually become computationally easier for an infinite family of graphs $H$. We first give some definitions.

\medskip
\noindent
{\bf Terminology.}
For a subset $S\subseteq V$ in a graph $G=(V,E)$, let $G[S]$ denote the {\it induced} subgraph of $G$ by $S$,
that is, $G[S]$ is the graph obtained from $G$ after removing every vertex not in $S$.
Let $G_1+G_2$ be the disjoint union of~two vertex-disjoint graphs $G_1$ and $G_2$.
We say that paths $P^1,\ldots, P^k$, for some $k\geq 1$, 
are {\it mutually induced paths} of $G$ if there exists a set $S\subseteq V$ such that $G[S]=P^1+\ldots +P^k$; note that every $P^i$ is an induced path 
and that there is no edge between two vertices from different paths $P^i$ and $P^j$.
A path $P$ is an {\it $s$-$t$-path} (or $t$-$s$-path) if the end-vertices of $P$ are $s$ and $t$.

A {\it terminal pair} $(s,t)$ is an unordered pair of two distinct vertices~$s$ and $t$ in a graph~$G$, which we call {\it terminals}.
A set $T=\{(s_{1},t_{1}),\ldots,(s_{k},t_{k})\}$ of terminal pairs of~$G$ is a {\it terminal pair collection} if the terminals pairs are pairwise disjoint, 
so, apart from $s_i\neq t_i$ for $i\in \{1,\ldots,k\}$, we also have $\{s_i,t_i\}\cap \{s_j,t_j\}=\emptyset$ for every $1\leq i<j\leq k$.
We now define the following decision problem:
\problemdef{Induced Disjoint Paths}{a graph $G$ and terminal pair collection $T=\{(s_1,t_1)\ldots,(s_k,t_k)\}$.}{does $G$ have a set of mutually induced paths $P^1$,\ldots,$P^k$ such that $P^i$ is an $s_i$-$t_i$ path for $i\in \{1,\ldots,k\}$?}

\noindent
Note that as every path between two vertices $s$ and $t$ contains an induced path between $s$ and $t$, the condition that every $P^i$ must be induced is not strictly needed in the above problem definition.
We say that the paths $P^1,\ldots,P^k$, if they exist, form a {\it solution} of {\sc Induced Disjoint Paths}. 

We now generalize the above notions from pairs and paths to sets and connected subgraphs. 
Subgraphs $D^1,\ldots,D^k$ of a graph $G=(V,E)$ are {\it mutually induced subgraphs } of $G$ if there exists a set $S\subseteq V$ such that $G[S]=D^1+\ldots +D^k$.
A connected subgraph~$D$ of $G$ is a {\it $Z$-subgraph} if $Z\subseteq V(D)$.
A {\it terminal set} $Z$ is an unordered set of distinct vertices, which we again call {\it terminals}. A 
set ${\cal Z}=\{Z_1,\ldots,Z_k\}$ is a {\it terminal set collection} if $Z_1,\ldots,Z_k$ are pairwise disjoint terminal sets. 
We now introduce the generalization:
\problemdef{Induced Disjoint Connected Subgraphs}{a graph $G$ and terminal set collection ${\cal Z}=\{Z_1,\ldots,Z_k\}$.}{does $G$ have a set of mutually induced connected subgraphs $D^1,\ldots, D^k$ such that
$D^i$ is a $Z_i$-subgraph for $i\in \{1,\ldots,k\}$?}

\noindent
The subgraphs $D^1,\ldots,D^k$, if they exist, form a {\it solution}. 
We write  {\sc Induced Disjoint Connected $\ell$-Subgraphs} if $\ell=\max\{|Z_1|,\ldots,|Z_k|\}$ is fixed.
Note that {\sc Induced Disjoint Connected $2$-Subgraphs} is exactly {\sc Induced Disjoint Paths}.

\subsection{Known Results}\label{s-known}

Only results for {\sc Induced Disjoint Paths} are known and these hold for a slightly more general problem definition (see Section~\ref{s-con}). Namely, {\sc Induced Disjoint Paths} is linear-time solvable for circular-arc graphs~\cite{GPV16};
polynomial-time solvable for  chordal graphs~\cite{BGHHKP14}, AT-free graphs~\cite{GPV22}, graph classes of bounded mim-width~\cite{JKT20}; and \NP-complete for claw-free graphs~\cite{FKLP12},  line graphs of triangle-free chordless graphs~\cite{RTV21} and thus for (theta,wheel)-free graphs, and for planar graphs; the last result follows from a result of Lynch~\cite{Ly75} (see~\cite{GPV22}). 
Moreover, {\sc Induced Disjoint Paths} is \XP\ with parameter $k$ for (theta,wheel)-free graphs~\cite{RTV21} and even
 \FPT\ with parameter~$k$ for claw-free graphs~\cite{GPV15} and planar graphs~\cite{KK12}; the latter can be extended to graph classes of bounded genus~\cite{KK09}.

\subsection{Our Results}\label{s-our}	

Let $P_r$ be the path on $r$ vertices. A {\it linear forest} is the disjoint union of one or more paths. We write $F\ssi G$ if $F$ is an induced subgraph of $G$ and $sG$ for the disjoint union of $s$ copies of $G$.
We can now present our first two results:
 the first one includes our dichotomy for {\sc Induced Disjoint Paths} (take $\ell=2$).

\begin{theorem}\label{thm:IDPnew}
Let $\ell\geq 2$. For a graph $H$, {\sc Induced Disjoint Connected $\ell$-Subgraphs} on $H$-free graphs is polynomial-time solvable if  
$H \ssi sP_3+P_6$ for some $s\geq 0$; \NP-complete if $H$ is not a linear forest; and quasipolynomial-time solvable otherwise.
\end{theorem}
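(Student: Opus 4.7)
The three regimes have very different flavours, so the plan is to handle them separately: the hardness side by a case analysis on the structure of~$H$; the polynomial-time side by a structural algorithm for the extremal class $sP_3+P_6$-free graphs; and the quasi-polynomial side by a balanced-separator/branching argument inspired by the Gartland--Lokshtanov framework for $P_t$-free graphs. Throughout, since {\sc Induced Disjoint Connected $\ell$-Subgraphs} contains {\sc Induced Disjoint Paths} as the $\ell=2$ case, all hardness will come from the latter, while the algorithmic results must be designed to cope with the $\ell\geq 3$ case directly.

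\textbf{NP-completeness when $H$ is not a linear forest.} If $H$ is not a linear forest then $H$ contains either a cycle or a vertex of degree at least three; in the latter case, looking at the three neighbours shows that $H$ contains an induced $K_{1,3}$ or an induced triangle. So $H$ must contain as induced subgraph one of: a cycle, a triangle, or a claw. If $H$ contains $K_{1,3}$, then the class of claw-free graphs is contained in the class of $H$-free graphs, and NP-hardness follows from the result of~\cite{FKLP12,Fe89}. If $H$ contains a cycle (including $K_3$), then every graph of girth larger than $|V(H)|$ is $H$-free. I would then reduce from {\sc Induced Disjoint Paths} on planar graphs (which is NP-hard by Lynch's result) by subdividing every edge of the input sufficiently many times; subdivision preserves mutually induced $s_i$-$t_i$ paths and raises the girth arbitrarily, landing us inside the $H$-free class.

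\textbf{Polynomial time when $H\ssi sP_3+P_6$.} Because $H$-free is a subclass of $sP_3+P_6$-free, it suffices to give a polynomial algorithm on $sP_3+P_6$-free graphs for every fixed $s\geq 0$. I expect this to be the main obstacle, and I would proceed in two stages. First, solve the base case $s=0$, i.e.\ $P_6$-free graphs: here I would exploit that every connected $P_6$-free graph admits a dominating induced subgraph of very restricted shape (a dominating clique or a dominating $P_3$, in the style of Bacs\'o--Tuza), which constrains how any solution subgraph~$D^i$ must interact with the rest of the graph, allowing a guess-and-reduce algorithm that enumerates polynomially many candidate ``traces'' of the $D^i$ across such a dominating set and then completes each trace greedily. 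Second, bootstrap to $s\geq 1$: either the graph contains $s$ pairwise non-adjacent induced copies of $P_3$, in which case removing their vertex-neighbourhoods yields a $P_6$-free remainder (because any induced $P_6$ in the remainder together with the $s$ copies would form an induced $sP_3+P_6$), or no such packing exists and a Ramsey/greedy argument bounds some structural parameter; in either sub-case we branch on how each $D^i$ intersects the $O(s)$-bounded ``non-$P_6$-free'' part. The obstacle is to make this interaction polynomial even though $\ell$ can be larger than~$2$.

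\textbf{Quasi-polynomial time for linear forests $H$.} Every linear forest is an induced subgraph of $sP_t$ for some constants $s,t$, so it suffices to give a quasi-polynomial algorithm on $P_t$-free graphs (the extra copies of $P_t$ are absorbed by branching on a bounded number of deletions, as in the previous paragraph). Here the plan is to adapt the balanced-separator branching machinery used for {\sc Maximum Weight Independent Set} on $P_t$-free graphs: iterate the fact that a $P_t$-free graph has a balanced separator dominated by a union of $O(\log n)$ neighbourhoods, guess the intersection of each $D^i$ with such a separator (this is a subset of a logarithmically-covered set, giving a quasi-polynomial number of options), and recurse in the resulting components, where each component inherits a sub-instance of {\sc Induced Disjoint Connected $\ell$-Subgraphs} with terminals augmented by the guessed contacts. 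The recursion has quasi-polynomial total size, yielding the claimed running time. The subtlety is that solution \emph{subgraphs} (not just paths) must be reconstructed across the separator in a consistent, non-adjacent way, which requires the guess to record adjacency patterns, not merely vertex intersections.
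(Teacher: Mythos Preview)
Your hardness argument is essentially the paper's: a forest with a vertex of degree~$\geq 3$ contains $K_{1,3}$, so use claw-free/line-graph hardness (Lemma~\ref{l-line}); a graph with a cycle is excluded in any class of sufficiently large girth, so use the high-girth hardness of Lemma~\ref{l-girth}. Fine.

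The algorithmic side, however, misses the paper's key observation and takes a route that does not obviously terminate in polynomial (or quasipolynomial) time. The crucial fact you never invoke is Lemma~\ref{l-useful}: because $\ell$ is fixed and any $H$-free graph with $H$ a linear forest is already $P_t$-free for $t=2|V(H)|-1$, every solution subgraph $D^i$ has size at most $t\ell$, a \emph{constant}. Once this is said, the paper does not attempt a direct structural attack on $P_6$-free graphs at all. Instead it constructs (a polynomial-size induced subgraph of) the \emph{blob graph} $\Blob{G}$, whose vertices are the connected subsets of $G$ of size at most $t\ell$ containing exactly one terminal set $Z_i$; by Lemma~\ref{l-blob} this graph is again $P_6$-free (respectively $P_r$-free), and a solution to the original problem is precisely an independent set of size~$k$ in it. The polynomial case then quotes Theorem~\ref{t-ip6} for {\sc Independent Set} on $P_6$-free graphs, and the quasipolynomial case quotes Theorem~\ref{t-iquasi}. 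The ``extra'' $sP_3$ components are absorbed by first guessing $s$ of the bounded-size $D^i$ and deleting their closed neighbourhoods, which makes the remaining graph $P_6$-free.

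Your proposals for both the polynomial and quasipolynomial regimes try to run a guess-and-reduce or balanced-separator scheme \emph{per subgraph}, but $k$ is part of the input. Enumerating ``traces'' of each $D^i$ on a dominating structure, or guessing the intersection of each $D^i$ with a separator, multiplies over all $k$ subgraphs and is exponential in~$k$ without further ideas. The blob-graph reduction sidesteps this entirely by pushing the ``mutual non-adjacency'' constraint into an {\sc Independent Set} instance whose size depends polynomially on $n$ (not on~$k$), and then invoking off-the-shelf results. Your Bacs\'o--Tuza-style plan for $P_6$-free graphs and your direct adaptation of Gartland--Lokshtanov are therefore not wrong in spirit, but as stated they lack the mechanism that keeps the total work polynomial/quasipolynomial when~$k$ is unbounded; the paper's mechanism is precisely the blob graph plus Lemma~\ref{l-useful}.
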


\begin{theorem}\label{thm:IDCS}
For a graph $H$ such that $H\neq sP_1+P_6$ for some $s\geq 0$, {\sc Induced Disjoint Connected Subgraphs} on $H$-free graphs
is polynomial-time solvable for $H$-free graphs if $H \ssi sP_1+P_3+P_4$ or $H \ssi sP_1+ P_5$ for some $s\geq 0$, and it is \NP-complete otherwise.
\end{theorem}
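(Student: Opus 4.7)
The plan is to split the theorem into its \NP-complete and polynomial-time directions, drawing the first division between linear forests and non-linear forests. If $H$ is not a linear forest, then Theorem~\ref{thm:IDPnew} applied with $\ell=2$ already gives that Induced Disjoint Paths is \NP-complete on $H$-free graphs; since Induced Disjoint Paths is exactly the restriction of Induced Disjoint Connected Subgraphs to $|Z_i|=2$, we immediately inherit \NP-completeness for the more general problem. This reduces the remaining work to the case where $H$ is a linear forest.

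For the \NP-hard side among linear forests, the plan is to enumerate the finitely many minimal linear forests $F$ that are not induced subgraphs of any of $sP_1+P_5$, $sP_1+P_3+P_4$, or $sP_1+P_6$. Concrete examples include $P_7$ (the smallest path not covered by the polynomial cases) and small sums such as $3P_2$. For each such minimal $F$, the task is to construct a reduction producing $F$-free instances from a known \NP-hard problem. Because $\ell$ is now unbounded, terminal sets of arbitrary size can be used to encode clauses, hyperedges, or variable gadgets, which is precisely what opens up hardness in cases that remain tractable in the bounded-$\ell$ setting of Theorem~\ref{thm:IDPnew}. Reductions from problems such as Hypergraph 2-Colourability or Not-All-Equal 3-Satisfiability should be the most flexible here, since the clause-variable structure translates naturally into terminal-set collections.

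For the polynomial-time side, the approach is a base algorithm for the connected components of the forbidden forest, combined with a guess-and-reduce step that handles the $sP_1$ prefix. For $H \ssi sP_1+P_5$ the base case is a polynomial algorithm on $P_5$-free graphs, exploiting the structural property that every connected $P_5$-free graph has a dominating clique or dominating $P_3$; this lets one enumerate, for each terminal set $Z_i$, a bounded-size sketch of a connector and then check mutual compatibility. For $H \ssi sP_1+P_3+P_4$ the base case is an algorithm on $(P_3+P_4)$-free graphs, which will require a more delicate structural argument tailored to that class. The $sP_1$ prefix is then handled by the standard observation that in any $(sP_1+H')$-free graph, for every independent set $I$ of size~$s$ the graph $G-N[I]$ is $H'$-free; the algorithm branches over polynomially many such small independent sets that could interact with a hypothetical solution.

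The main obstacle is the algorithm for $(P_3+P_4)$-free graphs and its lift to $(sP_1+P_3+P_4)$-free graphs. Unlike $P_5$-free graphs, connected $(P_3+P_4)$-free graphs do not admit a clean dominating-structure theorem, so one cannot immediately reduce each connector to a bounded-size sketch; moreover, with $k$ and the terminal-set sizes both part of the input, there is substantial freedom in how each $Z_i$-subgraph can spread through the host graph, and the mutual-induced condition must be preserved across all choices. A secondary challenge is tailoring each \NP-hardness reduction so that the constructed graphs are truly $H$-free for the intended minimal $H$, rather than only avoiding a smaller forbidden pattern.
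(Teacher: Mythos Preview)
Your overall structure matches the paper: first dispose of non-linear forests via Theorem~\ref{thm:IDPnew}, then handle linear forests by splitting into polynomial cases and \NP-hard minimal obstructions. The hardness reductions from {\sc Not-All-Equal 3-Sat} that you anticipate are exactly what the paper uses for the $(3P_2,P_7)$-free case.

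There is, however, a genuine gap in your enumeration of minimal obstructions. You name $P_7$ and $3P_2$, but you omit $2P_4$. This matters: $2P_4$ (and more generally $sP_1+2P_4$) is not an induced subgraph of any $sP_1+P_5$, $sP_1+P_3+P_4$, or $sP_1+P_6$, yet it contains neither $P_7$ nor $3P_2$ as an induced subgraph (a single $P_4$ has no induced $2P_2$, so $2P_4$ has no induced $3P_2$). Hence your two reductions do not cover $H=sP_1+2P_4$, and a separate hardness construction for $2P_4$-free graphs is required. The paper supplies this as a distinct lemma via a reduction from {\sc Monotone 3-Satisfiability}, using an unbounded number $k$ of terminal sets (which is essential, since by Theorem~\ref{thm:k-IDCS} the problem is polynomial on $(sP_1+2P_4)$-free graphs for every fixed $k$). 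Without this third obstruction your case analysis for linear forests is incomplete.

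On the polynomial side, your plan is in the right spirit but differs from the paper. For $(sP_1+P_5)$-free graphs the paper does not rely on a ``bounded-size sketch'' via dominating clique/$P_3$ (note that a dominating clique can be arbitrarily large, so this does not immediately give polynomially many candidates when $|Z_i|$ is unbounded). Instead, after an edge-contraction preprocessing that forces every useful non-terminal to dominate an entire $Z_i$, it builds a polynomial-size piece of the blob graph and reduces to {\sc Independent Set} on $P_5$-free graphs. For $(sP_1+P_3+P_4)$-free graphs the paper's algorithm branches on the size of the smallest terminal set and on $k$, and invokes both the $(sP_1+P_5)$-free algorithm and the fixed-$k$ algorithm for $(sP_1+2P_4)$-free graphs as subroutines; your intuition that this case is the main technical obstacle is correct.
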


\noindent
Note the complexity jumps if we no longer fix $\ell$.  
We will show that all open cases in Theorem~\ref{thm:IDCS} are equivalent to exactly {\bf one} open case, namely $H=P_6$. 

\medskip
\noindent
{\bf Comparison.}
The {\sc Disjoint Connected Subgraphs} problem restricted to $H$-free graphs is polynomial-time solvable if $H\ssi P_4$ and else it is \NP-complete, 
even if the maximum size of the terminal sets is $\ell=2$, except for the three unknown cases $H\in \{3P_1,2P_1+P_2,P_1+P_3\}$~\cite{KMPSV22}. Perhaps somewhat surprisingly, Theorems~\ref{thm:IDPnew} and~\ref{thm:IDCS} show the induced variant is computationally easier for an infinite number of linear forests~$H$ (if $\P\neq \NP$).

\medskip
\noindent
{\bf Fixing \boldmath$k$.} If the number $k$ of terminal sets is fixed, we write {\sc $k$-Induced Disjoint Connected Subgraphs} and
 prove the following complete dichotomy.

\begin{theorem}\label{thm:k-IDCS}
Let $k\geq 2$.
For a graph $H$, {\sc $k$-Induced Disjoint Connected Subgraphs} on $H$-free graphs is polynomial-time solvable for $H$-free graphs if $H \ssi sP_1+2P_4$ or $H \ssi sP_1+ P_6$ for some $s\geq 0$, and it is \NP-complete otherwise.
\end{theorem}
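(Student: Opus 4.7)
I would prove the two directions of Theorem~\ref{thm:k-IDCS} separately, exploiting throughout that $k$-IDCS on $H$-free graphs is monotone in $H$: if $H'\ssi H$ then the $H'$-free graphs form a subclass of the $H$-free graphs, so hardness propagates upward and tractability downward. Consequently it suffices to prove \NP-hardness only for the \emph{minimal} $H$ outside the two claimed polynomial families, and polynomial-time solvability only for the two \emph{maximal} families $sP_1 + 2P_4$ and $sP_1 + P_6$.

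For the \NP-hardness direction, when $H$ is not a linear forest, hardness already follows from Theorem~\ref{thm:IDPnew} applied with $\ell = 2$ (which covers \problemIDPtwo), padded with $k-2$ dummy terminal pairs placed on isolated copies of $K_2$ that cannot interact with the rest of the graph. When $H$ is a linear forest outside both poly families, a direct enumeration shows the minimal forbidden linear forests are $P_7$, $P_2 + P_5$, and $3P_2$. For each of these three I would give a dedicated polynomial-time reduction, starting from \problemIDPtwo\ on general graphs or from a suitably constrained 3-SAT variant, and tailoring the gadgets to avoid creating an induced copy of the respective $H$. Each case demands a different gadget style: $3P_2$-freeness caps the induced matching number and rules out too many ``parallel'' pieces; $(P_2+P_5)$-freeness forbids pairing a long induced path with any separate induced edge; and $P_7$-freeness caps the length of induced paths, so that all connectors must be emulated through concatenations of short segments with chord edges.

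For the algorithmic direction I would design polynomial-time algorithms for $(sP_1 + 2P_4)$-free and $(sP_1 + P_6)$-free graphs. The crucial lever is that $k$ is fixed, so only a constant number of $Z_i$-subgraphs need be found and one can afford to branch on a constant-size ``skeleton'' of each $D^i$ at cost $n^{f(k)}$. My plan is, in both cases, first to dispose of the $sP_1$ part by enumerating $n^{O(s)}$ guesses of an absorbing vertex set which, together with the solution, must meet any would-be induced copy of the $sP_1$ portion; this reduces us to the base cases $H = 2P_4$ and $H = P_6$. In the $P_6$-free base case I would use the classical dominating-$P_3$ or dominating-biclique structure of connected $P_6$-free graphs to guess a bounded-size dominating witness of each $D^i$ and then grow each $Z_i$-subgraph via a controlled rule that respects mutual induced-disjointness (effectively a bounded branching on the interface vertices). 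In the $2P_4$-free base case, the very restrictive structure of $2P_4$-free graphs allows enumeration of the $O(n^{f(k)})$ possible ``shapes'' the $k$ mutually induced connected subgraphs can simultaneously realise, each of which can then be verified in polynomial time by checking a bounded list of consistency conditions.

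The main obstacle will be the algorithmic $(sP_1 + P_6)$-free case: the terminal sets $Z_i$ may be arbitrarily large and each $D^i$ may ramify substantially, so turning a guessed constant-size witness into a full solution requires a delicate combination of $P_6$-freeness, the $sP_1$ absorption argument, and mutual induced-disjointness. This is where I expect the proof to be longest and most technical, and where one must cleanly separate the roles of the guessed skeleton and the flexible extension phase.
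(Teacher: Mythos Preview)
Your high-level plan is sound and close to the paper's, but there are two concrete issues in the hardness direction.

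First, for non-linear-forest $H$ you invoke Theorem~\ref{thm:IDPnew} with $\ell=2$ and say this ``covers \problemIDPtwo''. It does not. Theorem~\ref{thm:IDPnew} with $\ell=2$ is {\sc Induced Disjoint Paths} with $k$ part of the input; it says nothing about hardness for any \emph{fixed} $k$. Your padding argument needs, as its seed, \NP-hardness of $2$-{\sc Induced Disjoint Connected Subgraphs} (or $2$-{\sc IDP}) on the relevant classes, and that is a genuinely different statement. The paper supplies exactly this via Lemmas~\ref{l-girth2} and~\ref{l-line2}, which reduce from $2$-{\sc Disjoint Connected Subgraphs} on high-girth graphs and on line graphs. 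Without such a fixed-$k$ source of hardness, your padding step has nothing to pad.

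Second, your enumeration of minimal linear-forest obstructions is off: $P_2+P_5$ is not minimal, because $P_5$ already contains an induced $2P_2$ (on its first, second, fourth and fifth vertices), so $P_2+P_5$ contains an induced $3P_2$. The correct list is just $3P_2$ and $P_7$. This is not merely cosmetic: the paper exploits it by building a \emph{single} gadget (Lemma~\ref{l-3p2p7}) that is simultaneously $3P_2$-free and $P_7$-free, so one reduction handles all remaining linear forests at once. Your plan to craft three separate reductions would work, but one of them is redundant and the other two can be merged.

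On the algorithmic side your outline is in the same spirit as the paper. The paper also disposes of the $sP_1$ part first (Lemma~\ref{l-sp1b}, though the reduction is more delicate than a single $n^{O(s)}$ guess), and in both base cases uses the Camby--Schaudt structure theorem (Theorem~\ref{t-cs}) to guess small connected dominating sets of the $D^i$, branching on whether each $D^i$ is ``easy'' or ``difficult''. Your description of the $P_6$-free case as the technical heart is accurate; the paper's Lemma~\ref{l-kp6} resolves it by a two-level application of Theorem~\ref{t-cs} (first to get a $P_4$-free or $P_4$ dominating set, then a dominating set of size at most two inside it) followed by a structural claim that, after appropriate cleaning, $N[y_i]$ dominating $Z_i$ for all $i$ is both necessary and sufficient.
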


\noindent
{\bf Comparison.}
We note a complexity jump between Theorems~\ref{thm:IDCS} and~\ref{thm:k-IDCS} when $H=sP_1+2P_4$ for some $s\geq 0$. 

\medskip
\noindent
{\bf Paper Outline.} 
Section~\ref{s-pre} contains terminology, known results and auxiliary results that we will use as lemmas.
Hardness results for Theorem~\ref{thm:IDPnew} transfer to Theorem~\ref{thm:IDCS}, whereas the reverse holds for polynomial results. As such, we show all our polynomial-time algorithms in Section~\ref{s-poly} and all our 
hardness reductions in Section~\ref{s-np}. The cases $H=sP_3+P_6$ in Theorem~\ref{thm:IDPnew} and $H=sP_1+P_5$ in Theorem~\ref{thm:IDCS} are proven by a reduction to {\sc Independent Set} via so-called {\it blob graphs}, just as the quasipolynomial-time result if $H$ is a linear forest. Hence, we also include the proof of the latter result in Section~\ref{s-poly}. 
In Section~\ref{s-proofs} we combine the results from the previous two sections to prove Theorems~\ref{thm:IDPnew}--\ref{thm:k-IDCS}.

In our theorems we have infinite families of polynomial cases related to nearly $H$-free graphs.
For a graph~$H$, a graph $G$ is  {\it nearly $H$-free} if $G$ is $(P_1+H)$-free. 
It is easy to see (cf~\cite{BH07}) that {\sc Independent Set} is polynomial-time solvable on nearly $H$-free graphs if it is so on $H$-free graphs. However, for many other graph problems, this might either not be true or less easy to prove (see, 
for example,~\cite{JPP20}). In Section~\ref{s-poly} we show that it holds for the relevant cases in  Theorem~\ref{thm:IDCS}, in particular
 for the case  $H=P_6$ (see Lemma~\ref{l-sp1b}). The latter result yields no algorithm but shows that essentially $H=P_6$ is the only one open case left in 
Theorem~\ref{thm:IDCS} (see also Remark~1). 
 
In Section~\ref{s-pro} we briefly discuss the more general problem definition of {\sc Induced Disjoint Paths} used in the literature. We show that the complexity dichotomy for this problem differs from the one in Theorem~\ref{thm:IDPnew} (for $\ell=2$).

In Section~\ref{s-con} we consider a number of directions for future work. In particular we consider the restriction {\sc $k$-Disjoint Connected $\ell$-Subgraphs} where {\it both} $k$ and $\ell$ are fixed and discuss some open problems. 

\section{Preliminaries}\label{s-pre}

Let $G=(V,E)$ be a graph. A subset $S\subseteq V$ is {\it connected} if $G[S]$ is connected.
 A subset $D\subseteq V(G)$ is {\it dominating} if every vertex of $V(G)\setminus D$ is adjacent to least one vertex of $D$; if $D=\{v\}$ then $v$ is a {\it dominating} vertex.
The {\it open} and {\it closed neighbourhood} of a vertex $u\in V$ are $N(u)=\{v\; |\; uv\in E\}$ and $N[u]=N(u)\cup \{u\}$.
For a set $U\subseteq V$ we define $N(U)=\bigcup_{u\in U}N(u)\setminus U$ and $N[U]=N(U)\cup U$.

For a graph $G=(V,E)$ and a subset $S\subseteq U$, we write $G-S=G[V\setminus S]$. If $S=\{u\}$ for some $u\in V$, we write $G-u$ instead of $G-\{u\}$.
A vertex $u$ is a {\it cut-vertex} of a connected graph $G$ if $G-u$ is disconnected.

The {\it contraction} of an edge $e=uv$ in a graph $G$ replaces the vertices $u$ and $v$ by a new vertex~$w$ that is adjacent to every vertex previously adjacent to $u$ or $v$; note that the resulting graph $G/e$ is still {\it simple}, that is, $G/e$ contains no multi-edges or self-loops.
The following lemma is easy to see  (see, for example,~\cite{KP20}).

\begin{lemma}\label{l-contract}
For a linear forest $H$, let $G$ be an $H$-free graph. Then $G/e$ is $H$-free for every $e\in E(G)$.
\end{lemma}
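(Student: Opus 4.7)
The plan is to prove the contrapositive: assuming $G/e$ contains an induced copy of $H$ on some vertex set $S\subseteq V(G/e)$, I will exhibit an induced copy of $H$ in $G$, contradicting $H$-freeness. Write $e=uv$ and let $w$ denote the new vertex of $G/e$. The single observation that drives every case is that for $z\in V(G)\setminus\{u,v\}$ one has $wz\notin E(G/e)$ iff both $uz,vz\notin E(G)$; equivalently, if $wz\in E(G/e)$ then at least one of $uz,vz$ is an edge of $G$.

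If $w\notin S$, there is nothing to do: $S\subseteq V(G)\setminus\{u,v\}$ and adjacencies inside $S$ are untouched by contraction, so $G[S]\cong H$. So assume $w\in S$. Because $H$ is a linear forest, $w$ has at most two neighbours in the induced copy; call them $a$ (always) and $b$ (if it exists). If $w$ is an endpoint of its path component so only $a$ is present, pick whichever of $u,v$ is adjacent to $a$ in $G$ and substitute it for $w$ in $S$. Every non-adjacency to the rest of $S$ is preserved by the central observation, so this produces an induced copy of $H$ in $G$. If instead $w$ has two $H$-neighbours $a,b$, and some single endpoint of $e$ (say $u$) satisfies $ua,ub\in E(G)$, the same substitution works with $u$ in place of $w$.

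The only remaining situation, which is the heart of the argument, is that up to swapping $u$ and $v$ we have $ua,vb\in E(G)$ while $ub,va\notin E(G)$. Here I would substitute the \emph{pair} $\{u,v\}$ for $w$ in $S$, using the edge $uv$ to splice the path $\dots-a-w-b-\dots$ into $\dots-a-u-v-b-\dots$ in $G$; the central observation again controls all remaining adjacencies, so the resulting induced subgraph is a linear forest $H'$ obtained from $H$ by lengthening one path component by a single vertex. Since $H\ssi H'$ (delete an endpoint of the lengthened path), $G$ itself contains an induced $H$, a contradiction. The main obstacle is precisely this splicing step: it is the one place where a straight substitution fails, and it crucially uses that the component of $H$ containing $w$ is a path rather than a more complicated tree, which is why the statement is false for general forests but true for linear forests.
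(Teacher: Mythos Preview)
The paper does not actually supply a proof of this lemma; it simply asserts that the statement is ``easy to see'' and points to~\cite{KP20}. Your contrapositive argument is correct and is the standard way to establish the result. One small omission: you write that the neighbours of $w$ in the induced copy of $H$ are ``$a$ (always) and $b$ (if it exists)'', but if the component of $H$ containing $w$ is a $P_1$ then $w$ is isolated and neither neighbour exists. This trivial case is handled exactly as your degree-one case, by replacing $w$ with either of $u,v$ and invoking your central observation to guarantee all required non-adjacencies; you may wish to say so explicitly. Otherwise, the case analysis is complete and the splicing step in the degree-two case is argued carefully, including the point that $H\ssi H'$ because one recovers $H$ from $H'$ by deleting an endpoint of the lengthened path.
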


In a solution $(D^1,\ldots,D^k)$ for an instance $(G,{\cal Z})$ of {\sc Induced Disjoint Connected Subgraphs}, if $D^i$ is minimal and $X_i$ is a minimum connected dominating set of $D^i$, then $X_i \cup Z_i = D^i$ or, equivalently, $D^i \setminus X_i \subseteq Z_i$. This will be relevant in our proofs, where we
use the following result of Camby and Schaudt, in particular for the case $r=6$ (alternatively, we could use the slightly weaker characterization of $P_6$-free graphs in~\cite{HP10} but the below characterization gives a faster algorithm).
  
\begin{theorem}[\cite{CS16}]\label{t-cs}
Let $r\geq 4$ and $G$ be a connected $P_r$-free graph. Let $X$ be any minimum connected dominating set of~$G$. Then 
$G[X]$ is either $P_{r-2}$-free or isomorphic to $P_{r-2}$. 
\end{theorem}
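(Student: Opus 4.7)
\medskip\noindent\textbf{Proof plan.}
The plan is to argue by contradiction. Suppose $G[X]$ contains an induced $P_{r-2}$ on vertices $x_1x_2\cdots x_{r-2}$, and further suppose $G[X]\not\cong P_{r-2}$, so $|X|>r-2$. The goal is to extend this $P_{r-2}$ to an induced $P_r$ in $G$, which would contradict the assumption that $G$ is $P_r$-free. Since $X$ is connected and $|X|>r-2$, there is at least one vertex of $X$ off the path, and we try to find one off each end; the difficulty is choosing them so that (i) the resulting path remains induced and (ii) the chosen vertices can themselves be extended by one more step if needed.

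The key lever is the minimality of $X$. For every $x\in X$, either (a) $x$ is a cut-vertex of $G[X]$, or (b) $x$ has a \emph{private neighbor} $p(x)\in V(G)\setminus X$ with $N(p(x))\cap X=\{x\}$. Indeed, if neither held, then $X\setminus\{x\}$ would still be connected and dominating, contradicting the minimality of $X$. I would apply this dichotomy separately to the two endpoints $x_1$ and $x_{r-2}$. If both $x_1$ and $x_{r-2}$ have private neighbors $p_1,p_{r-2}$, then $p_1\,x_1\,x_2\cdots x_{r-2}\,p_{r-2}$ is induced because each $p_i$ sees only $x_i$ in $X$ (in particular, $p_1\neq p_{r-2}$ since a common vertex would have two neighbors in $X$); this is an induced $P_r$. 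If, instead, $x_1$ is a cut-vertex of $G[X]$, let $C$ be the component of $G[X]-x_1$ that does not contain $x_2,\ldots,x_{r-2}$, and pick a neighbor $y\in C$ of $x_1$; then $y$ is non-adjacent to every $x_j$ with $j\geq 2$, so $y\,x_1\,x_2\cdots x_{r-2}$ is an induced $P_{r-1}$, and now one applies the dichotomy to $y$ (resp.\ to $x_{r-2}$) inside the appropriate side of the path to gain one more vertex. The symmetric and doubly symmetric cases are handled analogously, shifting the role of ``endpoint'' to the newly attached $y$'s.

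The main obstacle I expect is not the construction itself but the bookkeeping that keeps the extension \emph{induced}. When we recurse into the cut-vertex case, the freshly appended vertex $y$ might be adjacent to some $x_j$ for $j\geq 2$; we avoid this by picking $y$ inside a component of $G[X]-x_1$ that avoids $x_2,\ldots,x_{r-2}$ entirely. Likewise, when iterating the dichotomy on $y$ to append a further vertex, we must locate the required cut-vertex component or private neighbor inside $G[X\setminus\{x_1\}]$ (or some analogous subgraph) rather than in $G[X]$, so as not to reuse $x_1$ or bring in a vertex adjacent to $x_2,\ldots,x_{r-2}$. The hypothesis $r\geq 4$ ensures that the interior $x_2\cdots x_{r-3}$ is long enough for private neighbors at the two ends to be distinct and non-adjacent, so the endpoint-based extension yields a genuine induced $P_r$; the case $r=4$, where the interior shrinks to two vertices, is the tightest and should be checked explicitly. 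This is essentially the scheme carried out by Camby and Schaudt in~\cite{CS16}.
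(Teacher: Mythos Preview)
The paper does not contain a proof of this theorem; it is quoted from Camby and Schaudt~\cite{CS16} and invoked only as a black box (for $r\in\{4,6\}$) in the proofs of Lemmas~\ref{l-sp1b}, \ref{l-kp6}, \ref{l-2p4} and~\ref{l-sp1c}. There is therefore nothing in the present paper to compare your plan against.

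Assessed on its own, your plan has the right skeleton but a genuine gap in the doubly-private branch. When neither $x_1$ nor $x_{r-2}$ is a cut-vertex of $G[X]$, you pass to private neighbours $p_1,p_{r-2}\in V(G)\setminus X$ and claim that $p_1x_1\cdots x_{r-2}p_{r-2}$ is an induced $P_r$. For this you need $p_1p_{r-2}\notin E(G)$, and your stated justification --- that ``the interior $x_2\cdots x_{r-3}$ is long enough'' --- does not deliver it: the definition of a private neighbour only constrains edges from $p_1,p_{r-2}$ into $X$, and says nothing about an edge between two vertices of $V(G)\setminus X$. If $p_1p_{r-2}\in E(G)$ you have produced an induced $C_r$, from which an induced $P_r$ in $G$ does not follow. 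Repairing this branch requires bringing in a vertex of $X\setminus\{x_1,\ldots,x_{r-2}\}$ (which exists since $|X|>r-2$) and analysing how it attaches to the path; that is where the real work in~\cite{CS16} lies. Your cut-vertex branches, by contrast, are sound: there at most one private neighbour is used, and its non-adjacency to the remaining path vertices follows because those vertices all lie in $X$.
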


Let $G=(V,E)$ be a graph. Two sets $X_1,X_2 \subseteq V$ are \emph{adjacent} if $X_1 \cap X_2 \neq \emptyset$ or there exists an edge with one end-vertex in $X_1$ and the other in $X_2$. 
The \emph{blob graph} $\Blob{G}$ of $G$ has vertex set
$\{ X \subseteq V(G) ~|~ X \text{ is connected} \}$ and  edge set $\{ X_1X_2 ~|~ X_1 \text{ and } X_2 \text{ are adjacent}\} $. 
Note that blob graphs may have exponential size, but in our proofs we will only construct parts of blob graphs that have polynomial size.
We need the following known lemma that generalizes a result of Gartland et al.~\cite{GLPPR21} for paths.

\begin{lemma}[\cite{PPR21b}]\label{l-blob}
For every linear forest $H$, a graph $G$ is $H$-free if and only if $\Blob{G}$ is $H$-free.
\end{lemma}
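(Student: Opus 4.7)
The plan splits into the two implications of the biconditional. For the easy direction, $\Blob{G}$ is $H$-free implies $G$ is $H$-free, I will argue that $G$ embeds into $\Blob{G}$ as an induced subgraph via the map $v\mapsto\{v\}$: two distinct singletons $\{u\}$ and $\{v\}$ are adjacent in $\Blob{G}$ exactly when $uv\in E(G)$, so $G$ is isomorphic to the induced subgraph of $\Blob{G}$ on singletons. Hence any induced copy of $H$ in $G$ lifts to one in $\Blob{G}$, and the contrapositive gives the direction. Note that this argument does not use the assumption that $H$ is a linear forest.

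The substantive direction is the converse: assuming $H\ssi\Blob{G}$, I will show $H\ssi G$. Write $H=P_{r_1}+\dots+P_{r_t}$ and let $\{X^j_i : 1\le j\le t,\ 1\le i\le r_j\}$ be connected subsets of $V(G)$ forming an induced copy of $H$ in $\Blob{G}$, with $X^j_1,\dots,X^j_{r_j}$ playing the role of the $j$-th path. Unwinding the definition of $\Blob{G}$, two sets that are non-adjacent in $\Blob{G}$ are vertex-disjoint and have no $G$-edges between them. In particular, the unions $U^j=\bigcup_i X^j_i$ for distinct components $j$ are pairwise vertex-disjoint with no edges between them, and within a fixed component, $X^j_a$ and $X^j_b$ with $|a-b|\ge 2$ are disjoint with no edges between them.

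The main step is to extract an induced $P_{r_j}$ in $G$ from each $U^j$. For $r_j=1$ any vertex of $X^j_1$ works. For $r_j\ge 2$ I pick a shortest path $Q^j=v_0v_1\cdots v_\ell$ in $G[U^j]$ with $v_0\in X^j_1$ and $v_\ell\in X^j_{r_j}$, taking care that $v_0\neq v_\ell$; this is automatic when $r_j\ge 3$ because $X^j_1\cap X^j_{r_j}=\emptyset$, and when $r_j=2$ it is arranged by choosing $v_0\in X^j_1\setminus X^j_2$ or $v_\ell\in X^j_2\setminus X^j_1$, at least one of which is non-empty since $X^j_1\neq X^j_2$. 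Being a shortest path, $Q^j$ is induced in $G$. Now assign to each $v_i$ an index $a(i)$ with $v_i\in X^j_{a(i)}$ and fix $a(0)=1$, $a(\ell)=r_j$. Since $v_iv_{i+1}$ is a $G$-edge, $X^j_{a(i)}$ and $X^j_{a(i+1)}$ must be adjacent in $\Blob{G}$, which forces $|a(i)-a(i+1)|\le 1$ by the induced $P_{r_j}$ structure. Hence $\ell\ge r_j-1$, and the initial segment $v_0,\dots,v_{r_j-1}$ of $Q^j$ is an induced $P_{r_j}$ in $G$; write $Y^j$ for its vertex set.

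To conclude, the $Y^j$ are pairwise vertex-disjoint and pairwise non-adjacent in $G$, since $Y^j\subseteq U^j$ and the $U^j$ enjoy both properties across distinct components. Therefore $G[Y^1\cup\dots\cup Y^t]$ is exactly $H$, giving $H\ssi G$. The step I expect to require the most care is the corner case $r_j=2$ with overlapping $X^j_1,X^j_2$, where a naive shortest path could collapse to a single vertex; choosing an endpoint outside the overlap neutralizes this, and the rest is routine index tracking.
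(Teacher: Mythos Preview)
The paper does not prove Lemma~\ref{l-blob}; it cites it from~\cite{PPR21b}. Your proof is correct and follows what is essentially the standard argument: embed $G$ into $\Blob{G}$ via singletons for one direction, and for the other, extract from each path-component of the induced copy of $H$ in $\Blob{G}$ a shortest path in $G$ between the two end-blobs, then use an index-tracking argument (consecutive vertices on the path lie in adjacent blobs, so indices change by at most one) to conclude the path is long enough. The separation of components in $\Blob{G}$ then gives separation of the extracted paths in $G$. Your handling of the $r_j=2$ corner case, where the two blobs may overlap, is the right fix: choosing an endpoint outside the overlap forces $v_0\neq v_\ell$ and hence $\ell\geq 1$. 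One small point worth making explicit is that $G[U^j]$ is connected (since consecutive blobs are adjacent in $\Blob{G}$ and each blob is connected in $G$), so that a shortest $v_0$--$v_\ell$ path actually exists; you use this implicitly but never state it.
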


The {\sc Independent Set} problem is to decide if a graph $G$ has an {\it independent set} (set of pairwise non-adjacent vertices) of size at least~$k$ for some given integer~$k$. 

We need the following two known results for {\sc Independent Set}. The first one is due to Grzesik, Klimosov\'a, Pilipczuk and Pilipczuk~\cite{GKPP19}. The second  one is due to Pilipczuk, Pilipczuk and Rz{\k a}\.{z}ewski~\cite{PPR21}, who improved the previous quasipolynomial-time algorithm for {\sc Independent Set} on $P_t$-free graphs, due to Gartland and Lokshtanov~\cite{GL20} (whose algorithm runs in  $n^{O(\log^3n)}$ time).

\begin{theorem}[\cite{GKPP19}]\label{t-ip6}
{\sc Independent Set} is polynomial-time solvable for 
$P_6$-free graphs.
\end{theorem}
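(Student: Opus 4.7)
The plan is to use the framework of \emph{potential maximal cliques} (PMCs) due to Bouchitt\'e and Todinca: for any graph $G$, Maximum Weight Independent Set can be solved by dynamic programming in time polynomial in $n$ times the number of PMCs of $G$, where a PMC is a vertex set that occurs as a maximal clique in some minimal triangulation of $G$. The difficulty is that $P_6$-free graphs can have super-polynomially many PMCs in general, so a direct enumeration fails and the framework must be applied selectively.

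First I would reduce to connected $P_6$-free graphs (handling components independently) and apply standard preprocessing to eliminate twins and universal vertices, as these can be resolved greedily. Then I would exploit the classical Bacs\'o-Tuza structural theorem that every connected $P_6$-free graph admits either a dominating induced $P_5$ or a dominating clique. Enumerating all such dominating substructures (there are only polynomially many) and, for each, all possible intersections with a putative optimum independent set, gives a branching algorithm whose branching factor is polynomial.

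Within each branch, the fact that the dominating substructure is resolved severely restricts the minimal separators that can contribute meaningfully to the optimum. The key technical claim is that only polynomially many PMCs are \emph{relevant} in each branch, so that a Bouchitt\'e-Todinca-style dynamic programming restricted to those PMCs runs in polynomial time; one must also verify that the residual subinstances produced by the branching remain $P_6$-free, so that the recursion applies uniformly and the running time composes polynomially.

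The main obstacle, and the heart of the proof of Grzesik et al., is the structural analysis underlying this polynomial bound on the number of relevant PMCs. It requires showing that any minimal separator $S$ contributing to a relevant PMC interacts with the fixed dominating substructure in only boundedly many ``types'', which is what ultimately lets the DP table stay of polynomial size. The delicate use of the absence of an induced $P_6$ is what forces this bounded typing (roughly, long induced paths crossing $S$ would concatenate with the dominating $P_5$ to create a forbidden $P_6$), and I would expect the bulk of the effort to be spent establishing this lemma and checking that $P_6$-freeness survives each recursive reduction.
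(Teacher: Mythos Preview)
The paper does not prove this theorem. It is stated with attribution to Grzesik, Klimo\v{s}ov\'a, Pilipczuk and Pilipczuk~\cite{GKPP19} and is used only as a black box (in the proofs of Lemmas~\ref{l-sp6} and~\ref{l-p5}). There is therefore no proof in the present paper to compare your proposal against.

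As an aside on the actual content of~\cite{GKPP19}: you are right that the argument goes through the potential-maximal-clique framework and that the central difficulty is that $P_6$-free graphs can have exponentially many PMCs, so one must isolate a polynomial-size subfamily that suffices for the dynamic program. However, the proof in~\cite{GKPP19} does not proceed by branching on a Bacs\'o--Tuza-type dominating substructure and then recursing on $P_6$-free residual instances; it gives a direct, non-recursive structural analysis showing that the relevant PMCs can be enumerated in polynomial time. The branching-on-a-dominating-set paradigm you describe is closer in spirit to the quasipolynomial algorithms for general $P_t$-free graphs (cf.\ Theorem~\ref{t-iquasi}). Note also that the dominating-set statement for connected $P_6$-free graphs yields a connected dominating set that is $P_4$-free or isomorphic to $P_4$ (Theorem~\ref{t-cs} with $r=6$), not a dominating $P_5$ as your sketch states.
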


\begin{theorem}[\cite{GL20}]\label{t-iquasi}
For every $r\geq 1$, {\sc Independent Set} can be solved in $n^{O(\log^2n)}$ time for $P_r$-free graphs.
\end{theorem}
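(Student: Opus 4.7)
The plan is to design a recursive branching algorithm. We maintain a state $(G, W)$ with $W \subseteq V(G)$, and the goal is to compute a maximum independent set of $G[W]$. In each recursive call we pick a vertex $v \in W$ and branch into two subproblems: in the inclusion branch $v$ is forced into the solution and we recurse on $(G, W \setminus N_G[v])$; in the exclusion branch $v$ is forbidden and we recurse on $(G, W \setminus \{v\})$. This is the standard ``guess a vertex'' template; the whole difficulty lies in choosing $v$ so that the recursion tree has only $n^{O(\log^2 n)}$ leaves.

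First, I would introduce a potential function $\mu(W)$ measuring how unstructured $W$ still is inside the $P_r$-free ambient graph $G$. A natural choice, following the Gartland--Lokshtanov philosophy, counts the number of partial induced paths anchored in $W$, or equivalently the number of candidate separators that an optimal solution can still interact with. Using the $P_r$-freeness of $G$ one should start with $\mu(W) \le n^{O(\log n)}$, since induced paths have length less than $r$ and the combinatorial ``branching width'' of $W$ is thus tame.

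Next comes the core structural lemma: for every non-empty $W$ there exists a vertex $v \in W$ such that at least one of the two branches reduces $\mu$ by at least a factor $1 - 1/\log n$. The argument would examine a BFS layering from an arbitrary vertex. If some layer contains a vertex whose closed neighborhood intersects many of the current partial paths, then inclusion of that vertex sharply drops $\mu$. Otherwise, $P_r$-freeness forces a balanced layer structure in which excluding a carefully chosen vertex still makes geometric progress on $\mu$. Combined with memoization over the polynomially many distinct sets $W$ that can appear in the recursion, this yields depth $O(\log^2 n)$ and the claimed $n^{O(\log^2 n)}$ running time.

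The main obstacle is the structural lemma. Specifically, showing that a good branching vertex always exists, and that the decrease factor in the potential degrades only logarithmically rather than linearly in $r$, is delicate: one has to analyze the case where no vertex is ``heavy'' and argue via a careful counting of induced-path extensions that some safe exclusion still kills a polylogarithmic fraction of the potential. Obtaining only two logarithmic factors in the exponent, rather than the three produced by the first-generation Gartland--Lokshtanov argument, is the real technical hurdle and is where the refined potential analysis of Pilipczuk, Pilipczuk and Rz{\k a}\.{z}ewski is needed.
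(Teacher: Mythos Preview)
The paper does not prove this theorem at all: it is quoted from~\cite{GL20} (with the sharper $n^{O(\log^2 n)}$ bound credited to~\cite{PPR21}) and used purely as a black box in the proof of Lemma~\ref{l-quasi}. There is therefore no proof in the paper to compare your proposal against.

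As for the sketch itself, it is in the right spirit but has real gaps. The claim that one can memoize over ``the polynomially many distinct sets $W$ that can appear in the recursion'' is simply false: the recursion can visit exponentially many distinct subsets of $V(G)$, and neither the Gartland--Lokshtanov nor the Pilipczuk--Pilipczuk--Rz{\k a}\.{z}ewski algorithm relies on such memoization. The quasipolynomial bound comes from bounding the number of \emph{leaves} of the recursion tree via the potential, not from bounding the number of distinct states. Second, your arithmetic does not close: a potential starting at $n^{O(\log n)}$ and shrinking by a factor $1-1/\log n$ per step needs $\Theta(\log^3 n)$ steps to reach $O(1)$, which yields $n^{O(\log^3 n)}$ rather than $n^{O(\log^2 n)}$. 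Third, the description of the structural lemma (BFS layers, ``heavy'' vertices) does not match either actual argument: Gartland--Lokshtanov use a Gy\'arf\'as-path-based balanced separator, and Pilipczuk--Pilipczuk--Rz{\k a}\.{z}ewski use a potential that counts induced $P_r$'s in an auxiliary structure and a sampling argument to find a vertex whose deletion or inclusion kills a $1/\mathrm{poly}(r)$ fraction of them. Your outline would not, as written, yield a proof.
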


Two instances of some decision problem are {\it equivalent} if one is a yes-instance if and only if the other one is. We make frequently use of the following observation.

\begin{lemma}\label{l-independent}
From an instance $(G,{\cal Z})$ of {\sc Induced Disjoint Connected Subgraphs} we can in linear time, either find a solution for $(G,{\cal Z})$ or
obtain an equivalent instance $(G',{\cal Z}')$ with $|V(G')|\leq |V(G)|$, such that the following holds:
\begin{enumerate}
\item $|{\cal Z}'|\geq 2$;
\item  every $Z_i'\in {\cal Z}'$ has size at least~$2$; and
\item the union of the sets in ${\cal Z}'$ is an independent set.
\end{enumerate}
Moreover, if $G$ is $H$-free for some linear forest $H$, then $G'$ is also $H$-free.
\end{lemma}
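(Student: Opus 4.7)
The plan is to apply three reduction rules in a carefully chosen order, falling back on two trivial base cases whenever the reductions make the instance small.

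First I would check whether any two terminals $u \in Z_i$ and $v \in Z_j$ with $i\neq j$ are adjacent. Any solution would put $u \in D^i$ and $v\in D^j$, violating mutual inducedness, so the instance is a no-instance and can be replaced by an equivalent small no-instance (for example, four isolated vertices split into two pairs of size~$2$). Second, I would contract every edge inside a terminal set: if $u,v \in Z_i$ and $uv \in E(G)$, then every $D^i$ in any solution must contain both, so contracting $uv$ to a new vertex $w$ and setting $Z_i:=(Z_i\setminus\{u,v\})\cup\{w\}$ yields an equivalent instance, where in the forward direction one replaces $\{u,v\}\subseteq D^i$ by $w$ and in the backward direction one expands $w$ back into the edge $uv$ inside $D^i$. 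Because the cross-set check was done first, this contraction cannot create a new cross-set adjacency: a new edge $wx$ with $x\in Z_j$ would come from an old edge $ux$ or $vx$, which would already have been caught. Lemma~\ref{l-contract} ensures this step preserves $H$-freeness for any linear forest $H$, and after exhaustive application each $Z_i$ is independent in the updated graph.

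Third, I would eliminate singletons. If $Z_i=\{v\}$ (original or created by the previous step), I set $D^i:=\{v\}$, delete $N[v]$ from $G$, and remove $Z_i$ from ${\cal Z}$. In any solution to the original, every other $D^j$ must avoid $N[v]$ (otherwise it is adjacent to $v\in D^i$), so the $D^j$ with $j\neq i$ survive intact in $G-N[v]$; conversely, any solution to the reduced instance extended by $\{v\}$ gives a solution to the original. By the earlier steps, no other terminal sits in $N[v]$, so deleting $N[v]$ damages no other $Z_j$, and vertex deletion trivially preserves $H$-freeness. If the resulting $|{\cal Z}|$ has dropped below~$2$, I invoke the base case: an empty collection trivially admits a solution, and a single remaining $Z_1$ admits one iff it lies in one connected component of the current graph.

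After these rules, conditions (1)--(3) hold: no singletons (by the third rule), an independent union of terminals (within sets by the second rule, across sets by the first), and $|{\cal Z}|\geq 2$ (the base case handles the opposite); the vertex count never increases, and $H$-freeness is preserved throughout. Each rule is implementable in $O(|V|+|E|)$ via a single adjacency scan, connected components of each $G[Z_i]$, and one round of neighbourhood deletions, giving linear total time. The main subtlety is the order of operations: the cross-set check must precede the contraction step so that new cross-set adjacencies are not silently created, and the contraction step must precede singleton removal since contractions themselves can produce singletons that then need to be handled.
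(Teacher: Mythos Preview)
Your approach is essentially the same as the paper's: contract intra-set edges, delete $N[v]$ for singleton sets $Z_i=\{v\}$, handle cross-set adjacencies as no-instances, and fall back to a trivial solve when $|{\cal Z}|\leq 1$. The paper performs these steps in a slightly different order (contract, then singletons, then cross-set check), so your claim that the cross-set check \emph{must} precede contraction is overstated---a cross-set adjacency in the contracted graph always witnesses one in the original---but your order is equally valid.

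One small technical glitch: your canonical no-instance ``four isolated vertices split into two pairs'' need not be $H$-free when $H$ is a linear forest such as $3P_1$ or $4P_1$, so the ``moreover'' clause can fail. The paper sidesteps this by simply declaring the instance a no-instance rather than manufacturing a replacement graph; you should do the same (or, if you insist on outputting an instance, take it as an induced subgraph of $G$).
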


\begin{proof}
Let $(G,{\cal Z})$ be an instance of {\sc Induced Disjoint Connected Subgraphs}, where ${\cal Z}=\{Z_1,\ldots,Z_k\}$ for some integer $k\geq 1$.
If two adjacent vertices will always appear in the same set of every solution $(D^1,\ldots,D^k)$, then we can safely contract the edge between them at the start of any algorithm. This property holds for every pair of adjacent vertices of every $Z_i$. Hence, we contract every edge between two vertices that belong to the same set~$Z_i$. This takes linear time.

Let $(G^*,{\cal Z}^*)$ be the resulting instance. Note that every $Z\in {\cal Z}^*$ is an independent set.
If a set $Z\in {\cal Z}^*$ has size~$1$, say $Z=\{z\}$, then we remove $z$ and all its neighbours from $G^*$ to obtain an equivalent instance. After doing this for all singleton sets in ${\cal Z}^*$, we obtain our desired instance $(G',{\cal Z}')$ in linear time.

If $G$ is $H$-free for some linear forest~$H$, then $G'$ is also $H$-free, as $H$-freeness is preserved by edge contraction (Lemma~\ref{l-contract}) and by vertex deletion (by definition).
If it turns out that two vertices from different sets in ${\cal Z}'$ are adjacent, then $(G,{\cal Z})$ is a no-instance. Else we find that the union of the sets in ${\cal Z}'$ is an independent set. If $|{\cal Z}'|=1$, then the problem is trivial to solve, so we may assume that $|{\cal Z}'|\geq 2$. \qed
\end{proof}

We also use the next lemma frequently.

\begin{lemma}\label{l-useful}
Let $H$ be a linear forest.
If $(G,{\cal Z})$ is a yes-instance of {\sc Induced Disjoint Connected Subgraphs} and $G$ is $H$-free, then
$(G,{\cal Z})$ has a solution $(D^1,\ldots,D^k)$, where each $D^i$ has size at most $(2|V(H)|-1)|Z_i|$.
\end{lemma}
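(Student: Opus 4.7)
The plan is to start from an arbitrary solution $(D^1,\ldots,D^k)$ and greedily shrink each $D^i$ into a sub-solution of the required size, using a union of shortest paths. The key preliminary observation is purely combinatorial: if $H$ is a linear forest on $r=|V(H)|$ vertices with components $P_{a_1},\ldots,P_{a_t}$ where $\sum a_i = r$, then $H$ embeds as an induced subgraph of $P_{r+t-1}$ by placing the $P_{a_i}$ consecutively along the path with exactly one unused ``gap'' vertex between consecutive components. Since $t\le r$, this yields $H\ssi P_{2r-1}$, so any $H$-free graph is $P_{2r-1}$-free, and consequently every induced path of $G$, in particular every shortest path inside any induced subgraph of $G$, has at most $2r-2$ vertices.

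Given this, I would fix $i$, enumerate $Z_i=\{z_1,\ldots,z_m\}$ arbitrarily, and define $S_1:=\{z_1\}$. For $j=2,\ldots,m$, I choose a shortest path $P_j$ from $z_j$ to $S_{j-1}$ inside the connected graph $G[V(D^i)]$, and set $S_j:=S_{j-1}\cup V(P_j)$. Such a $P_j$ exists because $D^i$ is connected and contains $z_j$ and all of $S_{j-1}\subseteq V(D^i)$; it is induced in $G[V(D^i)]$ and hence in $G$, so by the observation above it has at most $2r-2$ vertices, contributing at most $2r-3$ new vertices to $S_{j-1}$. A routine induction shows that $G[S_j]$ is connected for every $j$, and a routine arithmetic check gives
\[
|S_m|\ \le\ 1+(m-1)(2r-3)\ \le\ (2r-1)\,m\ =\ (2r-1)\,|Z_i|.
\]
Setting $\tilde D^i:=G[S_m]$, the subgraph $\tilde D^i$ is connected, contains $Z_i$, lies inside $V(D^i)$, and has the required size bound.

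To finish, I would verify that the family $(\tilde D^1,\ldots,\tilde D^k)$ is still mutually induced. Since $V(\tilde D^i)\subseteq V(D^i)$ for each $i$ and the original solution contains no vertex nor any edge between distinct parts $D^i$ and $D^j$, no such vertex or edge can appear after restricting to subsets; hence $(\tilde D^1,\ldots,\tilde D^k)$ is a valid solution of the required size. I do not anticipate any real obstacle: the only slightly non-routine ingredient is the embedding $H\ssi P_{2|V(H)|-1}$, which is precisely what translates the abstract $H$-freeness hypothesis into a concrete length bound on induced paths, and hence on shortest paths within any connected induced subgraph.
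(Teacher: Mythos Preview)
The proposal is correct and takes essentially the same approach as the paper: both first observe that a linear forest $H$ on $r$ vertices embeds in $P_{2r-1}$, so $G$ is $P_{2r-1}$-free and all shortest paths inside each $D^i$ have bounded length, and then replace each $D^i$ by a union of such paths covering $Z_i$. The only cosmetic difference is that the paper routes every terminal to a single fixed anchor vertex of $D^i$, whereas you grow the set iteratively; your variant even yields the slightly sharper count $1+(|Z_i|-1)(2r-3)$, but the underlying idea is identical.
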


\begin{proof}
Consider a solution $(D^1,\ldots,D^k)$. As $G$ is $H$-free and $H$ is a linear forest, $G$ is also $P_t$-free where $t=2|V(H)|-1$. For every $D^i$, fix a vertex $u$ in it. As $D^i$ is connected and $G$ is $P_t$-free, there exists a path from every vertex of $Z_i$ to~$u$ that has length at most $t-1$. Let $A^i$ be the subgraph of $D^i$ induced by the union of the vertex sets of these paths of $D^i$. Note that the number of vertices of each~$A^i$ is at most $t|Z_i|$. As each $A^i$ is connected, $(A^1,\ldots, A^k)$ is a solution. \qed
\end{proof}

\section{Algorithms}\label{s-poly}

In this section we show all the polynomial-time and quasipolynomial-time results needed to prove our main theorems. 

\subsection{Using the Blob Graph Approach}

We start with the following result, which holds for every fixed integer~$\ell\geq 2$. In the proof of this result and the next one we use the blob graph approach.

\begin{lemma}\label{l-sp6}
Let $\ell\geq 2$. For every $s\geq 0$, {\sc Induced Disjoint Connected $\ell$-Subgraphs} is polynomial-time solvable for 
$(sP_3+P_6)$-free graphs.
\end{lemma}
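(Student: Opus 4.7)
My plan is to reduce the problem to \textsc{Independent Set} on an induced subgraph of the blob graph $\Blob{G}$. First apply Lemma~\ref{l-independent} to pass to an equivalent instance in which every $|Z_i|\ge 2$, the union of the terminal sets is independent, and $k\ge 2$; because $sP_3+P_6$ is a linear forest, the resulting graph remains $(sP_3+P_6)$-free. By Lemma~\ref{l-useful}, any yes-instance admits a solution in which each $|V(D^i)|\le c:=(2(3s+6)-1)\ell=(6s+11)\ell$, a constant in $s$ and $\ell$. Accordingly, for each $i\in\{1,\ldots,k\}$, enumerate in $n^{O(c)}=n^{O(1)}$ time the family $\mathcal{D}_i$ of all connected subsets $X\subseteq V(G)$ with $Z_i\subseteq X$ and $|X|\le c$. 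Setting $\mathcal{F}=\bigcup_{i=1}^{k}\mathcal{D}_i$, construct in polynomial time the induced subgraph $B:=\Blob{G}[\mathcal{F}]$.

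Next I would argue that $(G,\mathcal{Z})$ is a yes-instance if and only if $B$ has an independent set of size $k$. Any two members of $\mathcal{D}_i$ share the non-empty set $Z_i$, so each $\mathcal{D}_i$ is a clique in $B$; hence an independent set of size at least $k$ in $B$ must contain exactly one element from each $\mathcal{D}_i$, and by the definition of blob-graph adjacency the corresponding subgraphs $D^1,\ldots,D^k$ are then mutually induced with $D^i\supseteq Z_i$. Conversely, the bounded-size solution produced by Lemma~\ref{l-useful} yields an independent set of size $k$ in $B$. By Lemma~\ref{l-blob}, $\Blob{G}$ is $(sP_3+P_6)$-free and hence so is its induced subgraph $B$.

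Everything therefore reduces to solving \textsc{Independent Set} in polynomial time on a $(sP_3+P_6)$-free graph, which I view as the main obstacle of the proof. I intend to tackle this by induction on $s$: the base case $s=0$ is exactly Theorem~\ref{t-ip6}. For the inductive step, in the host graph $H$ I would either observe that $H$ is $P_3$-free (so $H$ is a disjoint union of cliques and the answer is trivial) or pick an induced $P_3$ on vertex set $U$; then $H-N[U]$ is $((s-1)P_3+P_6)$-free, and by induction one can compute a maximum (weighted) independent set inside $V(H)\setminus N[U]$ in polynomial time. The hard part is combining these non-neighborhood contributions with contributions from $N[U]$ itself, presumably by branching over induced $P_3$'s together with a weighted version of the $P_6$-free algorithm; arranging the branching so that the recursion stays polynomial for every fixed $s$ is where the main technical care is needed.
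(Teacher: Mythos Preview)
Your reduction to \textsc{Independent Set} on an induced subgraph $B$ of the blob graph is correct and is precisely the mechanism the paper uses. The gap is your final step. The inductive scheme you sketch for \textsc{Independent Set} on general $(sP_3+P_6)$-free graphs---pick an induced $P_3$ on $U$, recurse on the $((s-1)P_3+P_6)$-free graph $B-N[U]$, then ``combine with contributions from $N[U]$''---does not go through as stated: a maximum independent set may intersect $N[U]$ in an unbounded and unstructured way, and you give no mechanism to control this. You yourself flag this as ``where the main technical care is needed'' but do not supply it; polynomial-time \textsc{Independent Set} on arbitrary $(sP_3+P_6)$-free graphs is not an immediate corollary of Theorem~\ref{t-ip6}.

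The paper sidesteps this by exploiting structure that you discard when you pass to a generic \textsc{Independent Set} instance. After Lemma~\ref{l-independent}, every $Z_i$ is an independent set of size at least~$2$, so every candidate $D^i$ contains an induced $P_3$. The paper therefore \emph{first} handles $k\le s$ by brute force, and for $k\ge s+1$ branches over all $O(n^{st})$ choices of mutually induced bounded-size subgraphs $D^1,\ldots,D^s$. For each such choice, $G':=G-N[V(D^1)\cup\cdots\cup V(D^s)]$ is $P_6$-free, because the $s$ pairwise anticomplete induced $P_3$'s inside $D^1,\ldots,D^s$ together with any $P_6$ in $G'$ would form an induced $sP_3+P_6$ in $G$. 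Only then is the (bounded-size) blob graph built, over $G'$; by Lemma~\ref{l-blob} it is $P_6$-free and Theorem~\ref{t-ip6} applies directly. Equivalently, inside your framework: instead of attacking \textsc{Independent Set} on all of $B$, guess the independent-set elements lying in the cliques $\mathcal{D}_1,\ldots,\mathcal{D}_s$ (polynomially many choices, since each $|\mathcal{D}_i|=n^{O(1)}$ and $s$ is fixed); removing their closed $B$-neighbourhoods leaves an induced subgraph of $\Blob{G'}$, which is $P_6$-free. The point is that the ``$s$ branching'' must use the fact that each chosen blob already contains a $P_3$ in $G$---a single vertex of $B$ does not in general correspond to a $P_3$ in $B$, which is why your induction on $B$ alone stalls.
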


\begin{proof}
Let $(G,{\cal Z})$ be an instance of 
the {\sc Induced Disjoint Connected $\ell$-Subgraphs} problem, where $G$ is an $(sP_3+P_6)$-free graph for some $s\geq 0$. 
By Lemma~\ref{l-independent}, we may assume that the union of the sets in ${\cal Z}=\{Z_1,\ldots,Z_k\}$ is an independent set. 

First suppose that $k\leq s$. By Lemma~\ref{l-useful} we may assume that each $D^i$ in a solution $(D^1,\ldots,D^k)$ has size at most $t=(6s+11)\ell$. So $|D^1|+\ldots+|D^k|$ has size at most $kt\leq st$. Hence, we can consider all $O(n^{st})$ options of choosing a solution. As $s$ and $t$ are constants, this takes polynomial time in total.

Now suppose that $k\geq s+1$. We consider all $O(n^{(s-1)t})$ options of choosing the first $s$ subgraphs $D^i$, discarding those with an edge between distinct $D^i$ or between some $D^i$ and $Z_j$ for some $j\geq s+1$. For each remaining option, let $G'=G-N[V(D^1)\cup \cdots \cup V(D^s)]$ and ${\cal Z}'=\{Z_{s+1},\ldots,Z_k\}$. Note that  $G'$ is $P_6$-free.

Let $F$ be the subgraph of the blob graph $\Blob{G'}$ induced by all connected subsets $X$ in $G'$ that have size at most $11\ell$, such that $X$ contains all vertices of one set from ${\cal Z}'$ and no vertices from any other set of ${\cal Z}'$. Then $F$ has polynomial size, as it has $O(n^{11\ell})$ vertices, so we can construct $F$ in polynomial time.
By Lemma~\ref{l-blob}, $F$ is $P_6$-free.

We claim that $(G',{\cal Z}')$ has a solution if and only if $F$ has an independent set of size $k-s$.

First suppose that $(G',{\cal Z}')$ has a solution. Then, by Lemma~\ref{l-useful}, it has a solution $(D^{s+1},\ldots,D^k)$, where each $D^i$ has size at most~$11\ell$. Such a solution corresponds to an independent set of 
size~$k-s$ in~$F$. For the reverse implication, two vertices in $F$ that each contain vertices of the same set $Z_i$ are adjacent.
Hence, an independent set of size~$k-s$ in $F$ is a solution for $(G',{\cal Z}')$. 

Due to the above, it remains to apply Theorem~\ref{t-ip6} to find in polynomial time
whether $\Blob{G'}$ has an independent set of size $k-s$.\qed
\end{proof}
By replacing Theorem~\ref{t-ip6} by Theorem~\ref{t-iquasi} in the above proof and repeating the arguments of the second part we obtain the following result.

\begin{lemma}\label{l-quasi}
Let $\ell\geq 2$. For every $r\geq 1$, {\sc Induced Disjoint Connected $\ell$-Subgraphs} is quasipolynomial-time solvable for  $P_r$-free graphs.
\end{lemma}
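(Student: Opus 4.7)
The plan is to mimic the second half of the proof of Lemma~\ref{l-sp6}, adapted to the weaker $P_r$-freeness assumption, and to swap the polynomial-time \textsc{Independent Set} algorithm of Theorem~\ref{t-ip6} for the quasipolynomial-time algorithm of Theorem~\ref{t-iquasi}. Because the $P_r$-free hypothesis has no analogue of the detachable $sP_3$ component, no initial enumeration of the first $s$ subgraphs is required; we may work on $G$ itself throughout.

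First I would invoke Lemma~\ref{l-independent} so that on the resulting instance $(G,{\cal Z}) = (G, \{Z_1, \ldots, Z_k\})$ the union of the terminal sets is an independent set. Then, by Lemma~\ref{l-useful} applied with $H = P_r$, we may assume that if $(G,{\cal Z})$ is a yes-instance, it admits a solution in which every $D^i$ has at most $(2r-1)\ell$ vertices.

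Next I would construct the subgraph $F$ of the blob graph $\Blob{G}$ induced by all connected sets $X \subseteq V(G)$ such that $|X| \leq (2r-1)\ell$, $Z_i \subseteq X$ for exactly one $i \in \{1,\ldots,k\}$, and $X \cap Z_j = \emptyset$ for all $j \neq i$. Since $r$ and $\ell$ are fixed constants, $F$ has $O(n^{(2r-1)\ell})$ vertices and can be built in polynomial time. By Lemma~\ref{l-blob}, $F$ is $P_r$-free. The key correspondence is that an independent set of size $k$ in $F$ is exactly a solution for $(G,{\cal Z})$: adjacency in $\Blob{G}$ captures both vertex-intersection and edges between sets, so independence in $F$ encodes the ``mutually induced'' condition; and since the terminals form an independent set, two vertices of $F$ containing the same $Z_i$ would share a terminal and thus be adjacent, so any independent set of size $k$ automatically covers each $Z_i$ exactly once. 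Conversely, Lemma~\ref{l-useful} turns any solution into such an independent set.

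Finally I would apply Theorem~\ref{t-iquasi} to $F$ to test in time $|V(F)|^{O(\log^2 |V(F)|)} = n^{O(\log^2 n)}$ whether $F$ contains an independent set of size $k$. The main ``obstacle'' is really only a verification: that the bound $(2r-1)\ell$ from Lemma~\ref{l-useful} is small enough to keep $|V(F)|$ polynomial, and that $P_r$-freeness transfers to the blob graph via Lemma~\ref{l-blob}. Both are immediate from the lemmas already established, so no new combinatorial argument is needed beyond that of Lemma~\ref{l-sp6}.
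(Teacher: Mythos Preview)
Your proposal is correct and follows exactly the route the paper indicates: the paper's own proof is a one-line remark saying to repeat the second part of the argument for Lemma~\ref{l-sp6} with Theorem~\ref{t-iquasi} in place of Theorem~\ref{t-ip6}, and you have faithfully unpacked precisely that, including the use of Lemma~\ref{l-useful} to bound $|V(D^i)|$ by $(2r-1)\ell$ and Lemma~\ref{l-blob} to transfer $P_r$-freeness to the relevant polynomial-size induced subgraph of the blob graph.
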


\subsection{Nearly H-Free Graphs}

In this section we prove a crucial lemma on nearly $H$-free graphs.

\begin{lemma}\label{l-sp1b}
For $k\geq 2$, $r\leq 6$ and $s\geq 1$, if {\sc ($k$-)Induced Disjoint Connected Subgraphs} is polynomial-time solvable for $P_r$-free, graphs, then it is so for $(sP_1+P_r)$-free graphs.
\end{lemma}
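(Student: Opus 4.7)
\smallskip\noindent\textbf{Plan.} I would prove the lemma by induction on $s\ge 0$. The base case $s=0$ is immediate, because $(0\cdot P_1+P_r)$-free graphs are exactly $P_r$-free graphs, and the hypothesis directly supplies the desired polynomial-time algorithm. For the inductive step with $s\ge 1$, I assume the statement for $s-1$ and take an instance $(G,\mathcal{Z})$ with $G$ being $(sP_1+P_r)$-free. I first apply Lemma~\ref{l-independent} to ensure that $\bigcup_i Z_i$ is an independent set and that every $|Z_i|\ge 2$.

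The central structural observation is that for every vertex $v\in V(G)$, the induced subgraph $G-N[v]$ is $((s-1)P_1+P_r)$-free. Indeed, any induced copy of $(s-1)P_1+P_r$ in $G-N[v]$, together with the vertex $v$ (which is non-adjacent to every vertex of $G-N[v]$ by definition of $N[v]$), would form an induced $sP_1+P_r$ in $G$, contradicting the hypothesis. The algorithm therefore enumerates every vertex $v\in V(G)$ that is neither a terminal nor adjacent to any terminal, and invokes the inductive algorithm on the sub-instance $(G-N[v],\mathcal{Z})$. Since all terminals remain in $V(G-N[v])$, this is a valid instance, and each of the $O(n)$ calls runs in polynomial time by the inductive hypothesis; the algorithm returns yes iff some sub-call does.

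For correctness, if $(G,\mathcal{Z})$ has a solution $(D^1,\ldots,D^k)$ whose support $U=\bigcup_i V(D^i)$ satisfies $V(G)\setminus N[U]\neq\emptyset$, then any $v$ in this set is automatically non-terminal (terminals lie in $U$) and non-terminal-adjacent (since $v$ has no neighbor in $U\supseteq\bigcup_iZ_i$); moreover $N[v]\cap U=\emptyset$, so the solution persists in $G-N[v]$ and is detected by the inductive call. The step I expect to be the main obstacle is the residual case in which every solution's support dominates $G$ (i.e.\ $N[U]=V(G)$), for then no suitable $v$ is available. Handling this case requires a supplementary argument tailored to dominating solutions, for instance using the size bound on each $D^i$ from Lemma~\ref{l-useful} together with the structural control on $P_r$-free graphs from Theorem~\ref{t-cs} that is available precisely in the range $r\le 6$, so that dominating candidate supports can be enumerated or reduced directly to the $P_r$-free algorithm.
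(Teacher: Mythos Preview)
Your inductive reduction via $G-N[v]$ is correct as far as it goes, and the observation that $G-N[v]$ is $((s-1)P_1+P_r)$-free is the standard ``nearly $H$-free'' trick. The difficulty is that the case you flag as ``the main obstacle'' --- every solution has dominating support --- is not a residual technicality but essentially the entire content of the lemma, and your sketch does not address it.

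Concretely, neither of the tools you point to does the job. Lemma~\ref{l-useful} bounds $|V(D^i)|$ by a constant times $|Z_i|$, but $|Z_i|$ is part of the input and can be $\Theta(n)$, so this gives no polynomial enumeration of candidate supports. Theorem~\ref{t-cs} applies to $P_r$-free graphs, but in the dominating case you only know $G$ (and hence each $D^i$) is $(sP_1+P_r)$-free, not $P_r$-free; nothing you have written forces the $D^i$'s themselves to be $P_r$-free. The paper's proof obtains this by a case split on terminal-set sizes: once at least two terminal sets have size $\ge s$, the $s$ independent terminals of one set witness that every other $D^i$ is $P_r$-free, and only then does Theorem~\ref{t-cs} become available. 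Even after that, the connected dominating set $X_i$ of $D^i$ produced by Theorem~\ref{t-cs} need not have bounded size, and the paper requires a further nontrivial argument (building private-neighbour sets inside a large $X_i$ and using a pigeonhole over seven blocks of size $s$) to reduce to a genuinely $P_r$-free instance. None of this structure is present in, or recoverable from, your outline; as written, the proposal is a restatement of the easy half together with an acknowledgement that the hard half remains to be done.
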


\begin{proof}
First let $r=6$ and $k$ be part of the input.
Let $(G,{\cal Z})$ be an instance of  {\sc Induced Disjoint Connected Subgraphs}, where $G$ is an $(sP_1+P_6)$-free graph for some integer $s\geq 1$ and ${\cal Z}=\{Z_1,\ldots,Z_k\}$. We may assume without loss of generality that $|Z_1|\geq |Z_2|\geq \cdots \geq |Z_k|$.
By Lemma~\ref{l-independent}, we may assume that $k\geq 2$; every $Z_i\in {\cal Z}$ has size at least~$2$; and the union of the sets in ${\cal Z}$ is an independent set. We assume that {\sc Induced Disjoint Connected Subgraphs} is polynomial-time solvable for $P_6$-free graphs.

\medskip
\noindent
{\bf Case 1.} For every $i\geq 2$, $|Z_i|\leq s-1$.\\
Let $D^1,\ldots,D^k$ be a solution for $(G,{\cal Z})$ (assuming it exists). By Lemma~\ref{l-useful}, we may assume without loss of generality that for $i\geq 2$, the number of vertices of~$D^i$ is at most $(2s+11)|Z_i|\leq (2s+11)(s-1)$.

First assume $k\leq s$. Then $V(D^2)\cup \cdots \cup V(D^k)$ has size at most $t$, where $t=(s-1)(2s+11)(s-1)$ is a constant. Hence, we can do as follows. We consider all $O(n^t)$ options for choosing the subgraphs $D^2,\ldots,D^k$. For each choice we check in polynomial time if
$D^2,\ldots,D^k$ are mutually induced and connected, and if each $D^i$ contains $Z^i$.
We then check in polynomial time if the graph $G-N[(V(D^2)\cup \cdots V(D^k)]$ has a connected component containing $Z_1$. As the number of choices is polynomial, the total running time is polynomial.

Now assume $k\geq s+1$. We consider all $O(n^{s(2s+11)(s-1)})$ options of choosing the $s$ subgraphs $D^2,\ldots,D^{s+1}$. We discard an option if for some $i\in \{1,\ldots,s\}$, the graph $D^i$ is disconnected. We also discard an option if there is an edge between two vertices from two different subgraphs $D^h$ and $D^i$ for some $2\leq h<i\leq s+1$, or if there is an edge between a vertex from some subgraph $D^h$ ($2\leq h\leq s$) and a vertex from some set $Z_i$ $(i=1$ or $i\geq s+2$). If we did not discard the option, then we solve {\sc Induced Disjoint Connected Subgraphs} on instance $(G-\bigcup_{i=2}^{s+2} N[V(D^i)], {\cal Z}\setminus \{Z_2,\ldots,Z_{s+1}\})$. The latter takes polynomial time as $G-\bigcup_{i=2}^{s+1} N[D^i]$ is $P_6$-free. As the number of branches is polynomial as well, the total running time is polynomial.

\medskip
\noindent
{\bf Case 2.} $|Z_2|\geq s$ (and thus also $|Z_1|\geq s$).\\
Let $D^1,\ldots,D^k$ be a solution for $(G,{\cal Z})$ (assuming it exists). As $|Z_1|\geq s$, we find that for every $i\geq 2$, $D^i$ is $P_6$-free. As $|Z_2|\geq s$, we also find that $D^1$ is $P_6$-free. Then, by setting 
$r=6$ in Theorem~\ref{t-cs}, every $D^i$ ($i\in \{1,\ldots, k\}$) has a connected dominating set~$X_i$ such that $G[X_i]$ is either $P_4$-free or isomorphic to $P_4$. 
We may assume without loss of generality that every $X_i$ is inclusion-wise minimal (as otherwise we could just replace $X_i$ by a smaller connected dominating set of $D^i$). 

\medskip
\noindent
{\bf Case 2a.} There exist some $X_i$ with size at least $7s+2$.\\
As $s\geq 1$, we have that $G[X_i]$ is $P_4$-free. We now set 
$r=4$ in Theorem~\ref{t-cs} and find that $G[X_i]$ has a connected dominating set~$Y_i$ of size at most $2$. Hence, $G[X_i]$ contains a set $R$ of $7s$ vertices that are not cut-vertices of $G[X_i]$.
As $X_i$ is minimal, this means that in $D^i$, each $r\in R$ has at least one neighbour $z\in Z_i$ that is not adjacent to any vertex of $X_i\setminus \{r\}$. We say that $z$ is a {\it private} neighbour of $r$. We now partition $R$ into sets $R_1,\ldots,R_7$, each of exactly $s$ vertices. For $h=1,\ldots,7$, let $R_h=\{r_h^1,\ldots,r_h^s\}$ and pick a private neighbour $z_h^j$ of $r_h^j$. For $h=1,\ldots,7$, let $Q_h=\{z_h^1,\ldots,z_h^s\}$. Each $Q_h$ is independent, as $Z_i$ is independent and $Q_h\subseteq Z_i$.

We claim that there exists an index $h\in \{1,\ldots,7\}$ such that $G-(N[Q_h]\setminus R_h)$ is $P_6$-free. For a contradiction, assume that for every $h\in \{1,\ldots,7\}$, we have that $G-(N[Q_h]\setminus R_h)$ is not $P_6$-free. As $G$ is $(sP_1+P_6)$-free and every $Q_h$ is an independent set of size $s$, we have that $G-N[Q_h]$ is $P_6$-free. We conclude that every induced $P_6$ of $G$ contains a vertex of $R_h$ for every $h\in \{1,\ldots,7\}$. This is contradiction, as every induced $P_6$ only has six vertices. Hence, there exists an index $h\in \{1,\ldots,7\}$ such that $G-(N[Q_h]\setminus R_h)$ is $P_6$-free.

We exploit the above structural claim algorithmically as follows. We consider all $k=O(n)$ options that one of the sets $X_i$ has size at least $7s+2$. For each choice of index $i$ we do as follows. We consider all $O(n^{2s})$ options of choosing a set $Q_h$ of $s$ vertices from the independent set $Z_i$ together with a set $R_h$ of $s$ vertices from $N(Q_h)$. We discard the option if a vertex of $Q_h$ has more than one neighbour in $R_h$, or if  $G'=G-(N[Q_h]\setminus R_h)$ is not $P_6$-free. Otherwise, we solve {\sc Induced Disjoint Connected Subgraphs} on instance $(G',{\cal Z}')$, where ${\cal Z}'=({\cal Z}\setminus \{Z_i\}) \cup 
\{(Z_i\setminus Q_h) \cup R_h\}$. As $G'$ is $P_6$-free, the latter takes polynomial time by our initial assumption. Hence, as the total number of branches is $O(n^{2s+1})$ the total running time of this check takes polynomial time.

\medskip
\noindent
{\bf Case 2b.} Every $X_i$ has size at most $7s+1$.\\ 
First assume $k\leq s$. We consider all $O(n^{s(7s+1)})$ options of choosing the sets $X_1,\ldots,X_k$. For each option we check if $(X_1\cup Z_1,\ldots,X_k\cup Z_k)$ is a solution for
$(G,{\cal Z})$. As the latter takes polynomial time and the total number of branches is polynomial, this takes polynomial time.

Now assume $k\geq s+1$. We consider all $O(n^{s(7s+1)})$ options of choosing the first $s$ sets $X_1,\ldots,X_s$. We discard an option if for some $i\in \{1,\ldots,s\}$, the set $X_i\cup Z_i$ is disconnected. We also discard an option if there is an edge between two vertices from two different sets 
$X_h\cup Z_h$ and $X_i\cup Z_i$ for some $1\leq h<i\leq s$, or if there is an edge between a vertex from some set $X_h\cup Z_h$ ($h\leq s$) and a vertex from some set $Z_i$ $(i\geq s+1$). If we did not discard the option, then we solve {\sc Induced Disjoint Connected Subgraphs} on instance $(G-\bigcup_{i=1}^s N[X_i\cup Z_i], \{Z_{s+1},\ldots, Z_k\})$. The latter takes polynomial time as $G-\bigcup_{i=1}^s N[X_i\cup Z_i]$ is $P_6$-free. As the number of branches is polynomial as well, the total running time is polynomial.

\medskip
\noindent
From the above case analysis we conclude that the running time of our algorithm is polynomial. If $r\leq 5$ and/or $k$ is fixed we use exactly the same arguments.
\qed
\end{proof}

\noindent
{\bf Remark 1.}
Due to Lemma~\ref{l-sp1b}, the missing cases $H=sP_1+P_6$ in Theorem~\ref{thm:IDCS} are all equivalent to the case $H=P_6$. 

\subsection{Two Applications of Lemma~\ref{l-sp1b}}

We will first use Lemma~\ref{l-sp1b} for the case where $r=5$. In the proof of the next result, we also make use of the blob approach again.

\begin{lemma}\label{l-p5}
For every $s\geq 0$, {\sc Induced Disjoint Connected Subgraphs} is polynomial-time solvable for $(sP_1+P_5)$-free graphs.
\end{lemma}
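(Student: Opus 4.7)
The plan is to prove the base case $s=0$ directly, namely that {\sc Induced Disjoint Connected Subgraphs} is polynomial-time solvable on $P_5$-free graphs, and then to invoke Lemma~\ref{l-sp1b} with $r=5$ to lift the result to $(sP_1+P_5)$-free graphs for $s\ge 1$.

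For the base case, we follow the blob-graph strategy from Lemma~\ref{l-sp6}. Let $(G,{\cal Z})$ be an instance with $G$ a $P_5$-free graph. By Lemma~\ref{l-independent} we may assume that $k\ge 2$, every $|Z_i|\ge 2$, and $\bigcup_{i=1}^k Z_i$ is independent. In an inclusion-wise minimal solution $(D^1,\ldots,D^k)$, each $D^i$ is connected and $P_5$-free, so Theorem~\ref{t-cs} with $r=5$ produces a minimum connected dominating set $X_i$ of $D^i$ such that $G[X_i]$ is either $P_3$-free---hence, being connected, a clique---or isomorphic to $P_3$. Moreover $D^i=X_i\cup Z_i$ by minimality.

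Our aim is to enumerate in polynomial time, for each $Z_i$, a family of candidate blobs of the form $X\cup Z_i$ that is rich enough to contain every possible solution component. The $P_3$-type candidates are easy: there are only $O(n^3)$ induced $P_3$'s in $G$, giving $O(n^3)$ candidates per $Z_i$. The clique-type candidates are the main obstacle: minimality forces each $x\in X$ to have a $Z_i$-private neighbour, hence $|X|\le |Z_i|$, but since $|Z_i|$ can be linear in $n$, a direct enumeration of cliques of size up to $|Z_i|$ is exponential. To overcome this, we plan to exploit the $P_5$-freeness of $G$ together with the private-neighbour structure: fixing any one vertex $c\in X$ together with its private neighbour $z\in Z_i$ forces $X\setminus\{c\}\subseteq N(c)\setminus N[z]$, and within this restricted region the $P_5$-freeness of $G$ should leave only polynomially many extensions to consider, yielding a polynomial list of clique-type blobs.

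Once all candidate blobs are in hand, we form the subgraph $F$ of $\Blob{G}$ whose vertices are tagged candidates $(i,B)$, putting an edge between $(i,B)$ and $(j,B')$ whenever either $i=j$ or $B$ and $B'$ are adjacent in $\Blob{G}$. By Lemma~\ref{l-blob}, $F$ is $P_5$-free and therefore $P_6$-free, and a solution of $(G,{\cal Z})$ corresponds to an independent set of size $k$ in $F$ containing exactly one blob per terminal index, which is found in polynomial time via Theorem~\ref{t-ip6}. The hard part of this plan is the clique-type enumeration, where all of the $P_5$-specific structural work will be concentrated; the remaining steps are direct adaptations of the tools already developed in the paper.
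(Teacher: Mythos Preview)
Your reduction to the base case $s=0$ via Lemma~\ref{l-sp1b} is correct and matches the paper, as does the overall plan of building a polynomial-size subgraph of $\Blob{G}$ and solving {\sc Independent Set} on it via Theorem~\ref{t-ip6}. The gap is in the clique-type enumeration, which you correctly flag as the hard part but do not actually carry out. The sentence ``within this restricted region the $P_5$-freeness of $G$ should leave only polynomially many extensions to consider'' is a hope, not an argument, and in fact it is false as stated. Take $Z_i=\{z_1,\ldots,z_m\}$ independent, add for each $j$ two vertices $a_j,b_j$ with $N(a_j)\cap Z_i=N(b_j)\cap Z_i=\{z_j\}$, and make $\{a_1,b_1,\ldots,a_m,b_m\}$ a clique. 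This graph is $P_5$-free (every induced path has at most four vertices, of the shape $z_j$--clique--clique--$z_{j'}$), yet there are $2^m$ minimal cliques dominating $Z_i$, one for each choice of $a_j$ versus $b_j$. If the $a_j,b_j$ have distinct external neighbours, these $2^m$ blobs have pairwise distinct adjacencies in $\Blob{G}$, so you cannot simply keep one representative. Restricting to $N(c)\setminus N[z]$ for some fixed $c,z$ does not help: the same construction lives entirely inside $N(c)$.

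The paper avoids this enumeration altogether by a preprocessing step you are missing. After deleting every non-terminal adjacent to terminals in two different sets, one observes (using $P_5$-freeness applied to a \emph{maximal} solution) that any non-terminal $v$ adjacent to some $z\in Z_i$ but not to all of $Z_i$ must lie in $D^i$ in every maximal solution; hence the edge $vz$ can be safely contracted. Iterating this contraction (and re-applying Lemma~\ref{l-independent} and the deletion step) yields, in polynomial time, an equivalent $P_5$-free instance in which every remaining non-terminal is either adjacent to no terminal or adjacent to \emph{all} terminals of exactly one $Z_i$. At that point a single such vertex $s$ already dominates $Z_i$, so the only blobs needed are of the form $Z_i\cup\{s\}$, giving $O(kn)$ candidates in total. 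The key idea you are missing is this contraction argument; without it, the Camby--Schaudt route via Theorem~\ref{t-cs} does not by itself yield a polynomial candidate list.
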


\begin{proof}
Due to Lemma~\ref{l-sp1b} it suffices to prove the statement for $P_5$-free graphs only.
Let $(G,{\cal Z})$ be an instance of  {\sc Induced Disjoint Connected Subgraphs}, where $G$ is a $P_5$-free graph and ${\cal Z}=\{Z_1,\ldots,Z_k\}$. 
By Lemma~\ref{l-independent}, we may assume that $k\geq 2$; every $Z_i\in {\cal Z}$ has size at least~$2$; and the union of the sets in ${\cal Z}$ is an independent set. We may also delete every vertex from $G$ that is not in a terminal set from ${\cal Z}$ but that is adjacent to two terminals in different sets $Z_h$ and $Z_i$ (such a vertex cannot be used in any subgraph of a solution). We now make a structural observation that gives us a procedure for safely contracting edges; recall that edge contraction preserves $P_5$-freeness by Lemma~\ref{l-contract}. 

Consider a solution $(D^1 \dots D^k)$ that is {\it maximal} in the sense that any vertex~$v$ outside $V(D^1)\cup \cdots \cup V(D^k)$ must have a neighbour in
at least two distinct subgraphs~$D^i$ and $D^j$. Since $G$ is $P_5$-free, $v$ must be adjacent to all vertices of at least one of $D^i$ and $D^j$.
Since $v$ does not have neighbours in both $Z_i\subseteq V(D^i)$ and $Z_j\subseteq V(D^j)$, we find that  $v$ is adjacent to all vertices of exactly one of $D^i$ and~$D^j$.

The above gives rise to the following algorithm. Let $v$ be a vertex that is adjacent to at least one vertex $z \in Z_i$ but not to all vertices of $Z_i$.
As $v$ is adjacent to $z$ and $z$ is in $Z_i$, it hold that $v$ does not belong to any $D^h$ with $h\neq i$ for every (not necessarily maximal) solution $(D^1,\ldots,D^k)$. The observation from the previous paragraph tells us that if $v$ is not in any $D^h$ and $(D^1,\ldots,D^k)$ is a maximal solution,
then $v$ must be adjacent to all vertices of some $D^j$. As $v$ is adjacent to $z\in Z_i$, it holds by construction that $v$ is not adjacent to any vertex of any $Z_h\subseteq V(D^h)$ with $h\neq i$. Hence,  $i=j$ must hold. However, this is not possible, as we assumed that $v$ is not adjacent to all vertices of $Z_i\subseteq V(D^i)$.
Hence, we may assume without loss of generality that $v$ belongs to $D^i$ (should a solution exist).
This means that we can safely contract the edge $vz$ and put the resulting vertex in~$Z_i$. Then we apply Lemma~\ref{l-independent} again and also remove all common neighbours of vertices from $Z_i$ and vertices from other sets $Z_j$. This takes polynomial time and the resulting graph has one vertex less. Hence, by applying this procedure exhaustively we have, in polynomial time, either solved the problem or obtained an equivalent but smaller instance.

Suppose we have an equivalent instance. For simplicity we denote the obtained instance by $(G,{\cal Z})$ again, where $G$ is a $P_5$-free graph
 and ${\cal Z}=\{Z_1,\ldots,Z_k\}$ with $k\geq 2$. Due to our procedure, every $Z_i\in {\cal Z}$ has size at least~$2$;  the union of the sets in ${\cal Z}$ is an independent set. Moreover, every non-terminal vertex is adjacent either to no terminal vertex or is adjacent to all terminals of exactly one terminal set. We let $S$ be the set of vertices of the latter type. Observe that it follows from the preceding that only vertices of $S$ need to be used for a solution.

We now construct the subgraph $F$ of the blob graph $\Blob{G}$ that is induced by all connected subsets $X$ of the form $X=Z_i\cup \{s\}$ for some $1\leq i\leq k$ and $s\in S$. Note that $F$ has $O(kn)$ vertices. Hence, constructing $F$ takes polynomial time.
 Moreover, $F$ is $P_5$-free due to Lemma~\ref{l-blob}. As in the proof of Lemma~\ref{l-sp6}, we observe that $(G,{\cal Z})$ has a solution if and only if $F$ has an independent set of size~$k$. It now remains to apply (in polynomial time) Theorem~\ref{t-ip6}. \qed
\end{proof}

\noindent
We now show a stronger result when $k$ is fixed instead of part of the input. Again we will use Lemma~\ref{l-sp1b}.

\begin{lemma}\label{l-kp6}
For every integer $s\geq 0$, {\sc $k$-Induced Disjoint Connected Subgraphs} is polynomial-time solvable for $(sP_1+P_6)$-free graphs.
\end{lemma}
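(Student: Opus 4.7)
The plan is to first apply Lemma~\ref{l-sp1b} with $r=6$ to reduce to the base case $s=0$, so it suffices to show that {\sc $k$-Induced Disjoint Connected Subgraphs} is polynomial-time solvable on $P_6$-free graphs (for fixed~$k$). For the base case, I preprocess via Lemma~\ref{l-independent}, yielding $k\geq 2$, each $|Z_i|\geq 2$, and $\bigcup_i Z_i$ independent. By the observation in Section~\ref{s-pre}, I may restrict attention to solutions $(D^1,\ldots,D^k)$ in which each $D^i$ is inclusion-wise minimal, so that $D^i = X_i\cup Z_i$ for a minimum connected dominating set $X_i$ of~$D^i$.

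Applying Theorem~\ref{t-cs} with $r=6$ to the connected $P_6$-free graph $G[D^i]$ shows that $G[X_i]$ is either isomorphic to $P_4$ (Case A, where $|X_i|=4$) or $P_4$-free (Case B). In Case~B, a second application of Theorem~\ref{t-cs} with $r=4$ to the connected $P_4$-free graph $G[X_i]$ produces a minimum connected dominating set $Y_i$ of $G[X_i]$ with $|Y_i|\leq 2$; hence $X_i\subseteq N_G[Y_i]$ and $D^i\subseteq N_G^2[Y_i]$. Either way each $D^i$ admits a ``skeleton'' of size at most~$4$ (a $P_4$, an edge, or a single vertex). I enumerate such a skeleton for every index~$i$, giving $O(n^{4k})$ tuples, which is polynomial since $k$ is fixed.

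For each tuple of skeletons, I determine whether a valid solution consistent with it exists. In Case~A for index~$i$, the subgraph $D^i = X_i\cup Z_i$ is fully determined and can be checked directly. In Case~B, I argue that $X_i$ may WLOG be taken of the minimum form $Y_i\cup\{x_z : z\in Z_i\setminus N[Y_i]\}$, where each ``connector'' $x_z$ lies in $N(z)\cap N(Y_i)$: starting from any minimum CDS supplied by the structural argument, for each $z\in Z_i\setminus N[Y_i]$ I keep one neighbour of~$z$ inside it and discard the remaining non-$Y_i$ vertices; this preserves connectivity (through $Y_i$) and domination of~$Z_i$, and shrinking $D^i$ can only help the mutual-inducedness constraint.

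The main obstacle is the combinatorial selection of connectors across all indices, since there may be polynomially many choices per pair $(i,z)$ while connectors assigned to distinct $D^i$'s must be pairwise non-adjacent and distinct. I plan to encode this as an {\sc Independent Set} instance on a suitable induced subgraph~$F$ of the blob graph $\Blob{G}$: the vertices of~$F$ are the blobs $\{x,z\}$ for valid Case~B triples $(i,z,x)$, together with a single blob representing each Case~A subgraph. Blobs for the same pair $(i,z)$ share~$z$ and so form a clique in~$\Blob{G}$, enforcing at most one connector per $(i,z)$, while cross-index adjacency in $\Blob{G}$ captures exactly the forbidden interactions (shared vertices or edges between distinct $D^i$'s). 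By Lemma~\ref{l-blob}, $F$ is $P_6$-free, so Theorem~\ref{t-ip6} yields a polynomial-time algorithm to find a maximum independent set in~$F$, which of maximum size corresponds to a valid solution. The technical subtlety is to ensure that same-index blobs (which should be simultaneously ``selectable'') do not spuriously conflict, which I would address by pre-filtering the candidate connectors using the fixed skeletons before building~$F$.
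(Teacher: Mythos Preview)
Your reduction to $P_6$-free graphs via Lemma~\ref{l-sp1b}, the two applications of Theorem~\ref{t-cs}, and the branching over $O(n^{4k})$ skeletons all match the paper's proof. The divergence, and the gap, is in the final step. Your {\sc Independent Set} encoding on blobs $\{x,z\}$ cannot work as stated: for a single Case~B index~$i$ you need to select \emph{one blob for every} $z\in Z_i\setminus N[Y_i]$ simultaneously, but these same-index blobs are typically adjacent in $\Blob{G}$ (all connectors lie in $N(Y_i)$, so two connectors $x_z,x_{z'}$ may be adjacent, and a connector $x_z$ may also be adjacent to another terminal $z'\in Z_i$). An independent set in $F$ therefore forbids exactly the simultaneous selections you require. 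You flag this as a ``technical subtlety'' to be handled by ``pre-filtering,'' but no filtering based only on the fixed skeletons can remove same-index adjacencies between candidate connectors while keeping enough candidates; the conflict is intrinsic to the encoding, not to bad candidates. Since $|Z_i|$ is unbounded, you also cannot simply guess all connectors.

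The paper sidesteps this entirely: after guessing $Y_i$ (contracted to a single vertex $y_i$) it does \emph{not} try to pick connectors one by one. Instead it guesses, for each difficult index~$i$, just \emph{one} witness pair $(x_i,z_i)$ with $z_i$ a private neighbour of $x_i$, deletes a few further vertex sets, and then proves the structural claim that a solution exists in the current branch if and only if $N_G[y_i]$ dominates $Z_i$ for all~$i$; in the positive case $D^i = N_G[y_i]\cup Z_i$ works, and the mutual-inducedness of these sets is established by a direct $P_6$-freeness argument using the guessed paths $z_i\,x_i\,y_i$. In other words, the paper replaces your combinatorial selection problem by a single polynomial-time dominance check, justified by a short structural lemma; this is the idea your proposal is missing.
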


\begin{proof}
Due to Lemma~\ref{l-sp1b} it suffices to prove the statement for $P_6$-free graphs only.
Let $(G,{\cal Z})$ be an instance of {\sc $k$-Induced Disjoint Connected Subgraphs}, where $G$ is a $P_6$-free graph  and ${\cal Z}=\{Z_1,\ldots,Z_k\}$. By Lemma~\ref{l-independent}, we may assume that every $Z_i\in {\cal Z}$ has size at least~$2$ and that the union of the sets in ${\cal Z}$ is an independent set.
We start by deleting from $G$,
\begin{itemize}
\item [(i)] every common neighbour of a vertex of  $Z_i$ and a vertex of $Z_j$ with $i\neq j$. 
\end{itemize}
This takes polynomial time and is safe, as such vertices are not in any subgraph of a solution.

Consider a solution $(D^1,\ldots,D^k)$ (if it exists). As $G$ is $P_6$-free, each $D^i$ is $P_6$-free. By Theorem~\ref{t-cs} (take $r=6$), this means that every $D^i$ has a connected dominating set~$X_i$ such that $G[X_i]$ is either $P_4$-free or isomorphic to $P_4$. 
In the former case we say that $D^i$ is {\it difficult} and in the latter we say that $D^i$ is {\it easy}. 

We consider all options of choosing which of the subgraphs $D^i$ is easy and consider all options of choosing the corresponding dominating $P_4$s. This leads to $2^kO(n^{4k})$ branches, which is polynomial as $k$ is fixed.
We discard those options that do not result in mutually induced connected subgraphs $D^i$ with $Z^i\subseteq V(D^i)$ and also those with an edge between a vertex of some guessed $D^i$ and some $Z_j$ that will correspond to a difficult subgraph $D^j$.

For each remaining branch, we delete the vertices of each easy $D^i$ and all their neighbours from $G$. We also remove the corresponding sets $Z_i$ from ${\cal Z}$. For simplicity we denote the new instance by $(G,{\cal Z})$ again, and we let $|{\cal Z}|=k$. If $k\leq 1$, then we can solve the problem in a trivial way. Suppose that $k\geq 2$.
We note that $G$ is still $P_6$-free; every $Z_i\in {\cal Z}$ still has size at least~$2$ and that the union of the sets in ${\cal Z}$ is an independent set such that no two vertices from different $Z_i$ and $Z_j$ have a common neighbour. Moreover, if $(G,{\cal Z})$ has a solution, then every connected subgraph in it is difficult.

Consider a solution $(D^1,\ldots,D^k)$ (if it exists). As each $D^i$ is difficult, it contains (by definition) a connected dominating set~$X_i$ such that $G[X_i]$ is $P_4$-free. By Theorem~\ref{t-cs} (take $r=4$),  every $G[X_i]$ has a connected dominating set 
$Y_i$ of size at most~$2$. We consider all options of choosing the connected sets $Y_i$. For every chosen $Y_i$ of size~$2$, we contract the edge between the two vertices of $Y_i$. The resulting graph is still $P_6$-free by Lemma~\ref{l-contract}. Note that the number of branches is $O(n^{2k})$, which is polynomial as $k$ is fixed. In each branch, we may now assume that $Y_i=\{y_i\}$ for $i\in \{1,\ldots,k\}$ and thus $y_i$ will dominate $G[X_i]$. We discard an option if $\{y_1,\ldots,y_k\}$ is not an independent set, or if some $y_i$ is adjacent to a vertex of some $Z_j$ with $i\neq j$. For every other branch we continue as follows. From $G$ we first delete 

\begin{itemize}
\item [(ii)] for every $i\in \{1,\ldots,k\}$, every neighbour of $y_i$ adjacent to a vertex of $Z_j \cup \{y_j\}$ for some $j\neq i$.
\end{itemize}
This takes polynomial time and we may do this, as these common neighbours cannot belong to any subgraph in a solution $(D^1,\ldots,D^k)$ with $y_i\in V(D^i)$ for $i\in \{1,\ldots,k\}$.

We may assume without loss of generality that in the solution $(D^1,\ldots,D^k)$ we are looking for the subgraphs are minimal. That is, we cannot replace a subgraph $D^i$ with some subgraph $F^i$ with $V(F^i)\subset V(D^i)$.
So each vertex~$x$ of $X_i\setminus \{y_i\}$ is either a vertex of $Z_i$ or has a neighbour~$z_i$ in $Z_i$ that is not adjacent to any vertex of $X_i\setminus \{x_i\}$; in the latter case we say that $z_i$ is a {\it private} neighbour of $x_i$. For every $i\in \{1,\ldots,k\}$, at least one such pair $(x_i,z_i)$ exists, as otherwise $y_i$ dominates $D^i$, and thus $D^i$ would not be difficult.

We now consider for $i\in \{1,\ldots,k\}$, all options of choosing the pairs $(x_i,z_i)$ where $x_i$ is a neighbour of $y_i$ that is not from $Z_i$, whilst $z_i$ is taken from $Z_i$. We discard those options where $x_i$ and $z_i$ are not adjacent or where $z_i$ is adjacent to $y_i$; in both cases $z_i$ will not be a private neighbour of $x_i$. We also discard every option where the graphs $G[\{y_i,x_i,z_i\}]$ are not mutually induced. Note that the number of branches is $O(n^{2k})$ (so polynomial). For each branch that we have not discarded we continue by first deleting from $G$, the following sets of vertices:

\begin{itemize}
\item [(iii)] for $i\in \{1,\ldots,k\}$, every neighbour of $z_i$ not equal to $x_i$;
\item [(iv)] for $i\in \{1,\ldots,k\}$, every neighbour of $x_i$ that is adjacent to a vertex of $Z_j \cup \{x_j,y_j\}$ for some $j\neq i$;
\end{itemize}
This takes polynomial time. Moreover, we are allowed to delete all these vertices, as none of them can be used in the solution that we are trying to construct. In particular, every~$z_i$ will be a private neighbour of $x_i$, so every $z_i$ has only one neighbour in $V(D^1)\cup \cdots \cup V(D^k)$.
We now use the fact that (i)--(iv) hold to prove the following claim: 

\medskip
\noindent
{\it A solution for this branch exists if and only if $N_G[y_i]$ dominates $Z_i$ for every $i\in \{1,\ldots,k\}$.} 

\medskip
\noindent
First suppose that $(G,{\cal Z})$ has a solution $(D^1,\ldots,D^k)$ such that for every $i\in \{1,\ldots,k\}$ it holds that $y_i$ dominates $X_i$.
Then $N_G[y_i]$ contains $X_i$, which dominates $Z_i$ by definition.

Now suppose that $N_G[y_i]$ dominates $Z_i$ for every $i\in \{1,\ldots,k\}$. We claim that the $k$-tuple $(D^1,\ldots,D^k)$ where $V(D^i)=N_G[y_i] \cup Z_i$ for every $i\in \{1,\ldots,k\}$ is a solution. First note that each $D^i$ is connected (as $N_G[y_i]$ dominates $Z_i$)
and contains $Z_i$. Hence, it remains to show that $D^1,\ldots,D^k$ are mutually induced.

For a contradiction, let $u_i \in V(D^i)$ and $u_j \in V(D^j)$ be adjacent for some $i\neq j$. Due to (i)--(iv), we find that
$u_i$ belongs to $N_G[y_i]\setminus \{x_i\}$ and $u_j$ belongs to $N_G[y_j]\setminus \{x_j\}$.
First suppose that $u_i$ is not adjacent to $x_i$ or $u_j$ is not adjacent to $x_j$, say $u_i$ is not adjacent to $x_i$.
Now, $\{z_i, x_i, y_i, u_i, u_j,y_j\}$ induces a $P_6$, a contradiction. Hence, $u_i$ must be adjacent to $x_i$ and $u_j$ must be adjacent to $x_j$. However, now $\{z_i, x_i, u_i, u_j, x_j, z_j\}$ induces a $P_6$, another contradiction. Hence, we have proven the claim.

\medskip
\noindent
We can check in polynomial time whether $N_G[y_i]$ dominates $Z_i$ for every $i\in \{1,\ldots,k\}$. By the above claim, this means that we can check in polynomial time if a certain branch leads to a solution. As the total number of branches is polynomial, the running time of our algorithm is polynomial.
The correctness of our algorithm follows from its description; note that we examined all possible situations.
\qed
\end{proof}

\subsection{Two More Algorithmic Results}

In this section we present our final two polynomial-time algorithms. The first result holds for fixed~$k$. The second result is for a smaller graph class but holds even when $k$ and $\ell$ are both part of the input. To prove the second result, we will use the algorithm of the first result as a subroutine.

\begin{lemma}\label{l-2p4}
For every $k\geq 2$ and $s\geq 0$, {\sc $k$-Induced Disjoint Connected Subgraphs} is polynomial-time solvable for $(sP_1+2P_4)$-free graphs.
\end{lemma}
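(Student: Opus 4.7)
The plan is to branch on the structure of a minimal hypothetical solution $(D^1,\ldots,D^k)$ and, in each branch, reduce to a polynomial-time subroutine from an earlier lemma. The central observation is that for any induced $P_4$ $P$ in $G$, the graph $G-N[V(P)]$ is $(sP_1+P_4)$-free---any induced $sP_1+P_4$ there combined with $V(P)$ would yield an induced $sP_1+2P_4$ in $G$---and hence $(sP_1+P_5)$-free, since $P_4\ssi P_5$. Thus {\sc Induced Disjoint Connected Subgraphs} is polynomial-time solvable on $G-N[V(P)]$ by Lemma~\ref{l-p5}.

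We first apply Lemma~\ref{l-independent} to reduce to the case $k\geq 2$, $|Z_i|\geq 2$ for every $i$, and $\bigcup Z_i$ independent. Consider a minimal solution $(D^1,\ldots,D^k)$. If every $D^i$ is $P_4$-free, then Theorem~\ref{t-cs} with $r=4$ gives each $D^i$ a connected dominating set $X_i$ of size at most~$2$ and $V(D^i)=X_i\cup Z_i$; guessing all $k$ such sets in $O(n^{2k})$ ways and verifying the result handles this case. Otherwise some $D^i$ contains an induced $P_4$, which we guess together with~$i$ in $O(kn^4)$ ways. Write $Z_i^F=Z_i\setminus N_G[V(P^i)]$ for the terminals of $Z_i$ that are disjoint from and non-adjacent to $V(P^i)$.

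If $Z_i^F=\emptyset$, then $G[V(P^i)\cup Z_i]$ is connected; we set $D^i$ to be this subgraph and solve the residual instance for the remaining $k-1$ terminal sets on $G-N[V(D^i)]$, which is $(sP_1+P_5)$-free, using Lemma~\ref{l-p5}. If $0<|Z_i^F|<s$, we additionally guess, for each $z\in Z_i^F$, a shortest induced $z$-to-$V(P^i)$ path in $G$; because $(sP_1+2P_4)$-free graphs are $P_{2s+9}$-free, each such path has at most $2s+8$ vertices, giving $O(n^{(2s+8)(s-1)})$ options in total. We let $V^i$ be the union of $V(P^i)\cup Z_i$ with these paths, which is connected and contained in $V(D^i)$, set $D^i=G[V^i]$, and again invoke Lemma~\ref{l-p5} on $G-N[V^i]$. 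If $|Z_i^F|\geq s$, we use a second structural claim: $G-N[V(P^i)\cup Z_i^F]$ must be $P_4$-free, for otherwise an induced $P_4$ there, $V(P^i)$, and any $s$ vertices of $Z_i^F$ together form an induced $sP_1+2P_4$ in $G$. Hence every $D^j$ with $j\neq i$ is a connected $P_4$-free subgraph with a connected dominating set of size at most~$2$ by Theorem~\ref{t-cs} with $r=4$; we guess these sets in $O(n^{2(k-1)})$ ways, fix $V(D^j)=X_j\cup Z_j$, and check in polynomial time whether $V(P^i)\cup Z_i$ lies in a single connected component of $G-N[\bigcup_{j\neq i}V(D^j)]$, which then serves as $D^i$.

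Every branch runs in polynomial time for fixed $k$ and $s$, so the overall algorithm is polynomial. The hardest part will be the subcase $|Z_i^F|\geq s$: the structural claim that $G-N[V(P^i)\cup Z_i^F]$ is $P_4$-free is the key new ingredient that has no direct analogue in the preceding lemmas. Correctness in the path-guessing subcase will rely on taking shortest induced paths inside a minimal $D^i$, which remain induced in the whole of~$G$.
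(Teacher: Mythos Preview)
Your argument is correct, and it takes a genuinely different route from the paper's. The paper never invokes Lemma~\ref{l-p5} in this proof; instead it first brute-forces all terminal sets $Z_i$ with $|Z_i|\leq s-1$ (using Lemma~\ref{l-useful} to bound $|V(D^i)|$), then observes that among the remaining ``large'' sets at most two of the $D^i$ can contain an induced $P_4$ (because each large $Z_i$ supplies an induced $sP_1$), guesses the $P_4$-free ones via size-$\leq 2$ dominating sets, and finally handles the residual $k=2$ instance by a direct bounded-guessing argument for $V(D^2)\setminus Z_2$. Your central device---that $G-N[V(P)]$ is $(sP_1+P_4)$-free for any induced $P_4$ $P$, hence amenable to Lemma~\ref{l-p5}---lets you bypass both the small/large split and the reduction to $k=2$: once a single ``difficult'' $D^i$ is pinned down (via the three subcases on $|Z_i^F|$), the rest is handled in one shot by Lemma~\ref{l-p5}. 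The subcase $|Z_i^F|\geq s$, where you show $G-N[V(P^i)\cup Z_i^F]$ is $P_4$-free and hence all other $D^j$ admit size-$\leq 2$ dominating sets, is a clean structural shortcut with no analogue in the paper's proof. Your approach is more modular (it reuses Lemma~\ref{l-p5} as a black box, in the same spirit as the paper's later proof of Lemma~\ref{l-sp1c}); the paper's approach is self-contained. A minor wording issue: in subcase~2b you should say you guess induced $z$-to-$V(P^i)$ paths in $G$ of length at most $2s+8$ (not ``shortest'' paths in $G$), since what makes the correct branch succeed is that shortest paths \emph{inside $D^i$} have this bounded length and are induced in $G$; the algorithm itself cannot restrict to $D^i$ when guessing.
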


\begin{proof}
Let $s\geq 0$. Let $(G,{\cal Z})$ be an instance of {\sc $k$-Induced Disjoint Connected Subgraphs}, where $G$ is an $(sP_1+2P_4)$-free graph  and ${\cal Z}=\{Z_1,\ldots,Z_k\}$. By Lemma~\ref{l-independent}, we may assume that every $Z_i\in {\cal Z}$ has size at least~$2$ and that the union of the sets in ${\cal Z}$ is an independent set.

By Lemma~\ref{l-useful} we may assume without loss of generality that for a solution $(D^1,\ldots,D^k)$ it holds that every $D^i$ has size at most
$(2s+15)|Z_i|$. Call a set $Z_i$ {\it small} if $|Z_i|\leq s-1$, else $Z_i$ is {\it large}. 

If $Z_i$ is small, then $D^i$  has size at most $t$ where $t=(2s+15)(s-1)$. Let ${\cal Z}'$ be the subset of ${\cal Z}$ that contains the small sets of ${\cal Z}$. We consider all $O(n^{kt})$ options of choosing the corresponding connected subgraphs $D^i$. We check in polynomial time if there are no forbidden edges between these subgraphs or between such a subgraph and a large set, and we also check if each such $D^i$ contains $Z_i$. If one of the conditions is violated, we discard the choice. Otherwise, we delete the vertices of $N[V(D^i)]$ from $G$ for each small $Z_i$ and we also delete the small sets from ${\cal Z}$. Note that the resulting graph is still $(sP_1+2P_4)$-free.
For simplicity, we denote the resulting instance by $(G,{\cal Z})$ again. If $|{\cal Z}|=k\leq 1$, we solve the instance directly in a trivial way. Keep all created instances with more than one set in~${\cal Z}$.
Note that we created $O(n^{kt})$ branches in this way and the instance of each branch that we kept has no small sets and $k\geq 2$.

We say that a subgraph $D^i$ in a solution is {\it easy} if $D^i$ is $P_4$-free; otherwise $D^i$ is {\it difficult}.
As each $Z_i$ in ${\cal Z}$ is now large, each $D^i$ contains an induced $sP_1$. As $G$ is $(sP_1+2P_4)$-free, this means that at least $k-2$ subgraphs of any solution are easy. We consider all $O(k^2)$ options of choosing the easy subgraphs.
By Theorem~\ref{t-cs} (take $r=4$) we find that each easy $D^i$ has a connected dominating set $X_i$ of size at most~$2$. We consider all $O(n^{2(k-2)})$ options of choosing the vertices of the sets $X_i$ corresponding to the easy subgraphs $D^i$. Again, we discard a choice if some guessed $D^i=G[X_i\cup Z_i]$ is not connected or a vertex of some guessed $D^i$ is adjacent to a vertex of another guessed $D^h$ or to a vertex of a set $Z_j$ that will be contained in a $D^j$ that is difficult. Otherwise, we obtained, in polynomial time, a new instance after deleting the vertices of $N[V(D^i)]$ from $G$ for each easy $D^i$ and deleting the corresponding sets from ${\cal Z}$. For simplicity, we denote the resulting instance by $(G,{\cal Z})$ again. Observe that $G$ is still $(sP_1+2P_4)$-free, and in particular that ${\cal Z}$ has size $k\leq 2$. If $k\leq 1$, the problem has become trivial to solve. Assume that $k=2$. Then, for simplicity, we write ${\cal Z}=\{Z_1,Z_2\}$; note both $Z_1$ and $Z_2$ are large, as these sets were large at the start of the initial branch.
 
 So, to summarize, we have created in polynomial time $O(k^2n^{2(k-2)})$ branches and for each branch we have an instance $(G,{\cal Z})$ of {\sc $2$-Induced Disjoint Connected Subgraphs} where $G$ is $(sP_1+2P_4)$-free and ${\cal Z}=\{Z_1,Z_2\}$. Moreover, if an instance has a solution $(D^1,D^2)$ then both $D^1$ and $D^2$ are difficult.
It remains to show how we can solve this problem in polynomial time for each created instance $(G,{\cal Z})$. We do this below.

Consider a created instance $(G,{\cal Z})$. Let $(D^1,D^2)$ be a solution for it. As $D^1$ is difficult, $D^1$ has an induced $P_4$ by definition. As $G$ is $(sP_1+2P_4)$-free, this means that $D^2$ is $(sP_1+P_4)$-free. As $D^2$ is difficult, $D^2$ has an induced $P_4$ as well, say on vertices $a,b,c,d$. As $D^2$ is $(sP_1+P_4)$-free and $Z_2$ is an independent set, the set $\{a,b,c,d\}$ must dominate all but at most $s-1$ vertices of $Z_2$. Let $Z_2^*$ be the subset of $Z_2$ that consists of vertices not adjacent to any vertex of $\{a,b,c,d\}$; so, $Z_2^*$ has size at most~$s-1$.

Fix a vertex $u$ of $D^2$. As $D^2$ is $(sP_1+P_4)$-free and thus $P_{2s+3}$-free, there exists a path $P_i$ of length at most $2s+1$ from each $z_i\in Z_2^*$ to $u$. For the same reason we can also choose a path $P_d$ from $d$ to $u$ of length at most $2s+3$. Then replacing $D^2$ by the subgraph $F^2$ of $D^2$ that is induced by the union of $\{a,b,c,d,u\}\cup Z_2$ and the inner vertices of all these paths leads to an alternative solution $(D^1,F^2)$, where $F^2-Z_2$ has size at most $t'$, for $t'=(2s+2)(s+1)$.

It now remains to consider all $O(n^{t'})$ options of choosing the vertices of $F^2-Z_2$. For each option we check if $F^2$ is connected and contains $Z_2$ and if $G-N[F^2]$ contains a connected component that contains $Z_1$. This can be done in polynomial time.

Correctness of our algorithm follows from the above description. As the number of branches is polynomial and each branch can be processed in polynomial time, the running time of our algorithm is polynomial.
\qed
\end{proof}

\noindent
As mentioned, we use the algorithm of Lemma~\ref{l-2p4} as a subroutine in our next result, which holds even when $k$ and $\ell$ are part of the input. In addition, we also use the algorithm of of Lemma~\ref{l-p5} as a subroutine.

\begin{lemma}\label{l-sp1c}
For every $s\geq 0$, {\sc Induced Disjoint Connected Subgraphs} is polynomial-time solvable for $(sP_1+P_3+P_4)$-free graphs.
\end{lemma}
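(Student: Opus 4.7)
My plan is to branch on whether the sought solution contains a subgraph with an induced $P_4$, and to reduce each branch to an instance handled by Lemma~\ref{l-p5} or Lemma~\ref{l-2p4}. After applying Lemma~\ref{l-independent} I may assume $k\geq 2$, every $|Z_i|\geq 2$, and $\bigcup_i Z_i$ is independent; consequently every $D^i$ in any solution is a connected non-clique and therefore contains an induced $P_3$. Lemma~\ref{l-useful} lets me also assume $|D^i|\leq(2s+13)|Z_i|$. The driving structural observation is: if two subgraphs $D^i,D^j$ in a solution both contain an induced $P_4$ and some other $D^\ell$ satisfies $|Z_\ell|\geq s$, then $Z_\ell$ provides $sP_1$, $D^i$ provides $P_3$, and $D^j$ provides $P_4$, so that $G$ contains an induced $sP_1+P_3+P_4$, a contradiction.

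First I would search for a solution in which some $D^{i^*}$ contains an induced $P_4$, enumerating the index $i^*$ and the four vertices $\{a,b,c,d\}$ of such a $P_4$ in $O(n^5)$ branches. The key property is that $G':=G-N[\{a,b,c,d\}]$ is $(sP_1+P_3)$-free: any induced $sP_1+P_3$ in $G'$, together with $\{a,b,c,d\}$ (disjoint from and non-adjacent to $G'$), would induce $sP_1+P_3+P_4$ in $G$. Since $sP_1+P_3\ssi sP_1+P_5$, $G'$ is also $(sP_1+P_5)$-free. I would then determine $D^{i^*}$: directly by Lemma~\ref{l-useful} when $|Z_{i^*}|\leq s-1$, and when $|Z_{i^*}|\geq s$ by first applying the structural observation to $sP_1$ extracted from $Z_{i^*}$ itself to force $|Z_{i^*}\setminus N[\{a,b,c,d\}]|\leq s-1$, which bounds the number of outlying terminals that must be connected to $\{a,b,c,d\}$ inside $D^{i^*}$ and allows these short connectors to be enumerated in polynomial time. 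With $D^{i^*}$ fixed, I invoke Lemma~\ref{l-p5} on $(G-N[D^{i^*}],\{Z_j:j\neq i^*\})$, which remains $(sP_1+P_5)$-free as an induced subgraph of $G'$.

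In the complementary branch no $D^i$ contains an induced $P_4$, so every $D^i$ is a connected cograph; since a connected cograph containing an independent set of size $\geq 2$ is a join with a nonempty side whose vertices dominate that independent set, every $Z_i$ has a common neighbour $y_i\in D^i$, and I may restrict attention to the canonical form $D^i=Z_i\cup\{y_i\}$. The sub-problem reduces to choosing pairwise distinct, pairwise non-adjacent vertices $y_1,\ldots,y_k$, one $y_i$ per $Z_i$, each dominating $Z_i$ and non-adjacent to every $Z_j$ for $j\neq i$. For $k$ bounded by a constant depending on $s$ I invoke Lemma~\ref{l-2p4} directly, using that $(sP_1+P_3+P_4)\ssi(sP_1+2P_4)$. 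For unbounded $k$ I would apply a Lemma~\ref{l-sp1b}-style reduction: guess a constant number of large-terminal subgraphs so that their removed terminals constitute an $sP_1$-witness, and additionally guess a $P_3$-witness inside another subgraph; removing the corresponding closed neighbourhoods leaves a residual graph that is $(sP_1+P_5)$-free, on which Lemma~\ref{l-p5} solves the remaining instance.

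The hardest step will be the unbounded-$k$ case of the second branch, because choosing the $y_i$'s is essentially an independent-set-transversal problem on a $(sP_1+P_3+P_4)$-free graph and no direct polynomial-time algorithm for Independent Set on such a class is known. Making the reduction go through requires carefully orchestrating the guesses so that the removed part simultaneously witnesses both the $sP_1$ and the $P_3$ components of the forbidden subgraph, leaving an $(sP_1+P_5)$-free residue amenable to Lemma~\ref{l-p5}, while keeping the total branching polynomial.
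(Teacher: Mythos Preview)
Your approach is workable but substantially more intricate than the paper's, and the part you flag as ``hardest'' is in fact avoidable.

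The paper never branches on whether some $D^i$ contains an induced $P_4$. Instead it does a three-way case split that sidesteps the difficulty entirely. If the smallest terminal set satisfies $|Z_k|\leq s-1$, then by Lemma~\ref{l-useful} the whole subgraph $D^k$ has size at most $(2s+13)(s-1)$, so one guesses $D^k$ outright; since $D^k$ contains an induced $P_3$, the graph $G-N[V(D^k)]$ is $(sP_1+P_4)$-free and Lemma~\ref{l-p5} finishes. If every $|Z_i|\geq s$ and $k=2$, Lemma~\ref{l-2p4} applies directly. Finally, if every $|Z_i|\geq s$ and $k\geq 3$, then $Z_2$ contributes an induced $sP_1$ and $D^3$ contributes an induced $P_3$, so $D^1$ is forced to be $P_4$-free; by Theorem~\ref{t-cs} it has a connected dominating set $X_1$ of size at most~$2$, which one guesses, and removing $N[X_1\cup Z_1]$ again leaves an $(sP_1+P_4)$-free graph for Lemma~\ref{l-p5}. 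No rainbow-independent-set-style argument is needed.

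Compared with this, your Branch~1 does work, but your Branch~2 for unbounded $k$ is both unnecessary and imprecisely stated. Your plan there assumes you can pick ``large-terminal subgraphs'' to witness the $sP_1$; if every $|Z_i|\leq s-1$ this is impossible and you would instead need to guess $s+1$ subgraphs and take one terminal from each. That patch is easy, but the point is that the paper's case analysis makes the entire Branch~1/Branch~2 dichotomy disappear: whenever all $|Z_i|\geq s$ and $k\geq 3$, \emph{every} $D^i$ is automatically $P_4$-free, so your Branch~1 is vacuous there, and the remaining cases ($|Z_k|\leq s-1$ or $k=2$) have direct handles that do not depend on whether any $D^i$ contains a $P_4$.
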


\begin{proof}
Let $(G,{\cal Z})$ be an instance of  {\sc Induced Disjoint Connected Subgraphs}, where $G$ is an $(sP_1+P_3+P_4)$-free graph for some integer $s\geq 0$ and ${\cal Z}=\{Z_1,\ldots,Z_k\}$. 
We assume without loss of generality that $|Z_1|\geq |Z_2|\geq \cdots \geq |Z_k|$.
By Lemma~\ref{l-independent}, we may assume that $k\geq 2$; every $Z_i'\in {\cal Z}$ has size at least~$2$; and the union of the sets in ${\cal Z}$ is an independent set. 

\medskip
\noindent
{\bf Case 1.} $|Z_k|\leq s-1$.\\
By Lemma~\ref{l-useful} we may assume without loss of generality that for a solution $(D^1,\ldots,D^k)$, we have $|V(D^k)|\leq t$, where
$t=(2s+13)(s-1)$. We consider all $O(n^t)$ options of choosing the subgraph $D^k$. 
For each choice we check in polynomial time if $D^k$ is connected, contains
$Z_k$ and that no vertex of $D^k$ is adjacent to any vertex of $Z_1\cup \ldots \cup Z_{k-1}$. If one of these conditions does not hold, then we discard the choice. Otherwise, we solve  {\sc Induced Disjoint Connected Subgraphs} on instance $(G-N[V(D^k)], {\cal Z}\setminus \{Z_k\})$. This can be done in polynomial time. Namely, the graph $D^k$ contains an induced $P_3$, as $Z_k$ is an independent set of size at least~$2$. Thus, $G-N[D^k]$ is $(sP_1+P_4)$-free (and hence, $(sP_1+P_5)$-free) and we can apply Lemma~\ref{l-p5}.
As the number of branches is polynomial as well, the running time for processing Case~1 is polynomial. 

\medskip
\noindent
{\bf Case 2.} $|Z_k|\geq s$ (and hence $|Z_i|\geq s$ for every $i$).\\
We need to make a distinction into two more subcases (recall that $k\geq 2$).

\medskip
\noindent
{\bf Case 2a.} $k=2$.\\
As $G$ is $(sP_1+P_3+P_4)$-free, $G$ is also $(sP_1+2P_4)$-free and we can use Lemma~\ref{l-2p4} to process Case~2a in polynomial time.

\medskip
\noindent
{\bf Case 2b.} $k\geq 3$.\\
Let $(D^1,\ldots,D^k)$ be a solution (assuming it exists). As $Z_2$ and $Z_3$ are independent sets of size at least~$s$, both $D^2$ and $D^3$ contain an induced $sP_1$ and an induced~$P_3$. Hence, $D^1$ is $P_4$-free. By Theorem~\ref{t-cs} (take $r=4$), we find that $D^1$ has a connected dominating set $X_1$ of size at most~$2$. We consider all $O(n^2)$ options of choosing $X_1$. We discard a choice if $D^1=G[X_1\cup Z_1]$ is not connected or a vertex of $D^1$ is adjacent to a vertex of $Z_2\cup \cdots \cup Z_k$. For each non-discarded choice, we solve {\sc Induced Disjoint Connected Subgraphs} on instance $(G-N[V(D^1)],{\cal Z}\setminus \{Z_1\})$. The latter takes polynomial time, as we can apply Lemma~\ref{l-p5}: as $D^1$ has an induced~$P_3$, the graph $G-N[V(D^1)]$ is $(sP_1+P_4)$-free and thus $(sP_1+P_5)$-free. As the number of branches is polynomial as well, the running time for processing Case~2b is polynomial.

\medskip
\noindent
The correctness of our algorithm follows from the case descriptions. As each of the cases can be done in polynomial time, the total running time of our algorithm is polynomial.
\qed
\end{proof}

\section{NP-Completeness Results}\label{s-np}

In this section we show a number of NP-completeness results that we need for proving our main theorems. Some of these results hold even for more restricted graph classes.
If $\ell=2$, we write {\sc Induced Disjoint Paths} instead of {\sc Induced Disjoint Connected $\ell$-Subgraphs}. 

\subsection{High Girth}

The {\it girth} of a graph~$G$ that is not a forest is the length of a shortest cycle of $G$.
We prove two results for graphs of high girth.

\begin{lemma}\label{l-girth}
For every $g \geq 3$, {\sc Induced Disjoint Paths} is \NP-complete for the class of graphs of girth at least~$g$.
\end{lemma}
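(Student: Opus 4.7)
The plan is to prove NP-hardness (membership in \NP\ is immediate, a solution being certified by $k$ vertex sets of the graph). A natural first attempt is to reduce from \problemIDPtwo\ on general graphs, which is NP-complete by~\cite{Bi91,Fe89}, by replacing every edge of the input graph by a path of length $g$, which stretches every cycle to length at least $g$. The forward direction is immediate, as a solution in $G$ lifts edge by edge to one in the subdivision. However, naive subdivision fails the backward direction: consider $K_{2,2}$ with bipartition $\{a,b\},\{c,d\}$ and terminal pairs $(a,c),(b,d)$. This is a no-instance because the only vertex-disjoint $(a,c)$- and $(b,d)$-paths are the single edges $ac$ and $bd$, which are forbiddenly adjacent via the edges $ad$ and $bc$. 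But after subdividing each edge once, the length-2 paths $a$-$x_{ac}$-$c$ and $b$-$x_{bd}$-$d$ are mutually induced, turning the subdivision into a yes-instance.

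To avoid this obstruction, I would instead reduce directly from a girth-controlled variant of $3$-SAT, using the standard fact that $3$-SAT remains NP-hard even when its variable-clause incidence graph has girth at least $g$. From such an instance $\phi$ I would build a graph $G_\phi$ with two terminal pairs as follows: each variable $x_i$ contributes a gadget consisting of two internally vertex-disjoint length-$g$ paths (``true'' and ``false'' rails) between common ports; each clause $C_j$ contributes a gadget of girth at least $g$ linking its three literal rails; and the terminal pairs $(s_1,t_1),(s_2,t_2)$ are wired so that every induced $(s_1,t_1)$-path is forced to traverse the variable gadgets by choosing exactly one rail per variable (encoding a truth assignment), while every induced $(s_2,t_2)$-path is forced to visit the clause gadgets in a way that can be mutually induced with the first path only when the encoded assignment satisfies $\phi$. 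Every cycle of $G_\phi$ is then either confined to a single gadget (girth $\geq g$ by construction) or routes along the incidence graph of $\phi$ (girth $\geq g$ by assumption), so $G_\phi$ has girth at least $g$.

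The main obstacle is the correctness of this reduction, specifically the backward direction: one must show that every pair of mutually induced $(s_1,t_1)$- and $(s_2,t_2)$-paths in $G_\phi$ encodes a satisfying assignment of $\phi$, ruling out spurious solutions that exploit the extra flexibility afforded by the long subdivided gadgets. Handling this reduces to a careful case analysis of how the two paths can enter and leave each variable and clause gadget, showing that any deviation from the intended ``one rail per variable, at least one satisfied literal per clause'' traversal is impossible because it would force the two paths to share a vertex or to contain adjacent vertices inside some gadget. Once this case analysis is complete, the polynomial-time construction of $G_\phi$ gives the desired reduction.
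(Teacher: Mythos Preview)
Your proposal is not a proof: the second approach is a programme, not an argument. You describe variable and clause gadgets only at the level of ``two rails'' and ``linking literal rails'', you do not specify how the two terminal pairs are wired so that one path is forced through the variable gadgets and the other through the clause gadgets, and you explicitly defer the entire backward-direction case analysis. Nothing here can be checked.

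More importantly, your first attempt is essentially the right idea and you abandoned it for the wrong reason. Your $K_{2,2}$ example shows that subdivision does not preserve the yes/no status of {\sc Induced Disjoint Paths}. But it does preserve the status of {\sc Disjoint Paths}: in $K_{2,2}$ with pairs $(a,c),(b,d)$ the single edges $ac$ and $bd$ are already vertex-disjoint, so the original instance is a yes-instance of {\sc Disjoint Paths} and so is its subdivision. The fix is therefore to reduce from {\sc Disjoint Paths} rather than from {\sc Induced Disjoint Paths}. After subdividing every edge at least once, no two original vertices are adjacent, and every subdivision vertex has degree~$2$; hence if two vertex-disjoint $s_i$--$t_i$ paths had an edge between them, one endpoint of that edge would be a degree-$2$ subdivision vertex whose both neighbours lie on its own path, contradicting vertex-disjointness. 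So on the subdivided graph, {\sc Disjoint Paths} and {\sc Induced Disjoint Paths} coincide. This is exactly the route the paper takes: it quotes the known \NP-hardness of {\sc Disjoint Paths} on graphs of girth at least~$g$ from~\cite{KMPSV22}, notes that those hard instances are obtained by subdividing each edge at least once, and concludes in one line.
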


\begin{proof}
We reduce from {\sc Disjoint Paths}, which is known to be \NP-complete for graphs of girth at least $g$ for every $g \geq 3$~\cite[Lemma~9]{KMPSV22}. We observe that the reduction of~\cite{KMPSV22} is from {\sc Disjoint Paths} on arbitrary graphs by subdividing all edges an appropriate number of times, say $\lceil g/3 \rceil \geq 1$ times.  We note that for {\sc Disjoint Paths}, the reduction of Lynch~\cite{Ly75} has the property that the terminals are on disjoint vertices. Then the construction of~\cite{KMPSV22} guarantees that the terminals in the constructed graph are disjoint. Consider such an instance of {\sc Disjoint Paths} on a graph $G$ and terminal pairs $(s_1,t_1),\ldots,(s_k,t_k)$. By the previous, $s_1,\ldots,s_k,t_1,\ldots,t_k$ are disjoint. Since the construction subdivides each edge at least once, we obtain an equivalent instance of {\sc Induced Disjoint Paths} on a graph of girth at least~$g$.
\qed
\end{proof}

\noindent
For some of our other results we prove \NP-hardness by reducing from the {\sc $2$-Disjoint Connected Subgraphs} problem.
Recall that this problem asks if a given graph has two vertex-disjoint connected subgraphs containing pre-specified sets of vertices $Z_1$ and $Z_2$, respectively. 

\begin{lemma}\label{l-girth2}
For every $g \geq 3$, {\sc $2$-Induced Disjoint Connected Subgraphs} is \NP-complete for the class of graphs of girth at least~$g$.
\end{lemma}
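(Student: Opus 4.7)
Membership in \NP\ is immediate, since a candidate pair of subgraphs $(D^1,D^2)$ can be verified in polynomial time. For hardness I would mirror the strategy of Lemma~\ref{l-girth}: start from a non-induced variant of the problem on arbitrary graphs, known to be \NP-complete, and convert it into the induced variant on a graph of girth at least~$g$ by subdividing every edge sufficiently many times.

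Concretely, I would reduce from {\sc $2$-Disjoint Connected Subgraphs} on arbitrary graphs, which is a classical \NP-complete problem. Given an instance $(G,\{Z_1,Z_2\})$, I form $G'$ by subdividing every edge of $G$ exactly $m:=\lceil g/3\rceil$ times, leaving $Z_1,Z_2$ unchanged. Any cycle of length $\ell\geq 3$ in $G$ becomes a cycle of length $\ell(m+1)\geq 3(m+1)\geq g$ in $G'$, so $G'$ has girth at least~$g$.

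The heart of the argument is the equivalence of the two instances. For the forward direction, given a solution $(D^1,D^2)$ in $G$, I would enlarge each $D^i$ into $G'$ by adding all subdivision vertices on edges whose two endpoints both lie in $V(D^i)$; the extended subgraphs remain connected and vertex-disjoint and still cover the terminals. They are additionally mutually induced in $G'$, because any putative $G'$-edge between them is either inside a subdivision chain (and chains are assigned to a single side or to neither) or of the form $(u,w)$ where $u$ is an original vertex of one side and $w$ is the first subdivision vertex of an incident edge $uv$; but then the chain of $uv$ would belong to the other side, forcing $u$ into that side too and contradicting vertex-disjointness. Conversely, from a mutually induced solution $(D^{1\prime},D^{2\prime})$ in $G'$, I would project to $G$ by keeping only the original vertices in each $D^{i\prime}$. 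The key observation is that every subdivision vertex has degree~$2$ in $G'$, so any path inside $D^{i\prime}$ must traverse each subdivision chain it enters from one end to the other; this chain-traversal property ensures that the projection is connected in $G$, and vertex-disjointness is inherited directly.

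The main obstacle I anticipate is not the combinatorial argument itself, which is essentially a re-run of the reasoning used for Lemma~\ref{l-girth}, but pinning down the precise reference for \NP-completeness of {\sc $2$-Disjoint Connected Subgraphs} on arbitrary graphs. Once that is in hand, the subdivision argument above goes through mechanically and yields the desired conclusion for every $g\geq 3$.
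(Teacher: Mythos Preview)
Your proposal is correct and follows essentially the same approach as the paper: reduce from {\sc $2$-Disjoint Connected Subgraphs} and subdivide each edge enough times so that the girth becomes at least~$g$ and the induced and non-induced variants coincide. The paper shortens the write-up by citing \cite[Lemma~6]{KMPSV22} (which already establishes \NP-completeness of the non-induced problem on high-girth graphs via the same subdivision) and \cite{HPW09} for the base hardness, but the underlying argument is identical to yours.
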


\begin{proof}
We reduce from {\sc $2$-Disjoint Connected Subgraphs}, which is known to be \NP-complete for graphs of girth at least $g$ for every $g \geq 3$~\cite[Lemma~6]{KMPSV22}. Again, the reduction of~\cite{KMPSV22} subdivides the edges of an instance of {\sc $2$-Disjoint Connected Subgraphs} on general graphs, and we may assume that it does so at least once. Then we obtain an equivalent instance of {\sc $2$-Induced Disjoint Connected Subgraphs} on a graph of girth at least~$g$.
\qed
\end{proof}

\subsection{Line Graphs}

The {\it line graph} $L(G)$ of a graph $G$ has vertex set $\{v_e \mid e \in E(G)\}$ and an edge between $v_e$ and $v_f$ if and only if $e$ and $f$ are incident on the same vertex in $G$. 

The following two lemmas show \NP-completeness for line graphs. Lemma~\ref{l-line} is due to Fiala et al.~\cite{FKLP12}. They consider a more general variant of {\sc Induced Disjoint Paths}, but their reduction holds in our setting as well.
Lemma~\ref{l-line2} can be derived from the \NP-completeness of {\sc $2$-Disjoint Connected Subgraphs}~\cite{HPW09}.

\begin{lemma}[\cite{FKLP12}]\label{l-line}
{\sc Induced Disjoint Paths} is \NP-complete for the class of line graphs.
\end{lemma}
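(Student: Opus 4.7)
The plan is to give a polynomial reduction from the classical {\sc Disjoint Paths} problem on general graphs, which is NP-complete when $k$ is part of the input, and to observe that membership in NP is immediate by guessing the paths and checking the (mutually) induced conditions in polynomial time. Given an instance $(G,T)$ of {\sc Disjoint Paths} with $T=\{(s_1,t_1),\ldots,(s_k,t_k)\}$, first form a graph $G'$ from $G$ by attaching to each terminal $x\in\bigcup_i\{s_i,t_i\}$ a fresh private pendant vertex $x'$ together with the edge $xx'$. Then let $H:=L(G')$ and define a new terminal pair collection $T':=\{(v_{s_1s_1'},v_{t_1t_1'}),\ldots,(v_{s_ks_k'},v_{t_kt_k'})\}$, where $v_e$ denotes the vertex of $L(G')$ corresponding to edge $e$. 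By construction $H$ is a line graph, the size of $(H,T')$ is polynomial in $|V(G)|+|E(G)|$, and the $2k$ pendant edges of $G'$ are pairwise non-incident, so $T'$ is a valid terminal pair collection.

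The core of the proof is the structural correspondence: induced paths in $L(G')$ from $v_e$ to $v_f$ are in correspondence with simple paths in $G'$ whose first edge is $e$ and whose last edge is $f$, and two such induced paths are \emph{mutually induced} in $L(G')$ iff the corresponding simple paths in $G'$ are pairwise vertex-disjoint. For the forward direction, given pairwise vertex-disjoint $s_i$--$t_i$ paths $P^i$ in $G$, extend each by the two pendant edges to get pairwise vertex-disjoint $s_i'$--$t_i'$ paths $Q^i$ in $G'$, and let $R^i$ be the path in $L(G')$ whose consecutive vertices correspond to the consecutive edges of $Q^i$; vertex-disjointness of the $Q^i$ means that edges on different $Q^i$'s share no endpoint, so no edge of $H$ lies between distinct $R^i$'s, hence $R^1,\ldots,R^k$ are mutually induced paths from $v_{s_is_i'}$ to $v_{t_it_i'}$.

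For the converse direction, translate each induced path $R^i$ in $H$ into the corresponding edge sequence $e_1^i,\ldots,e_{m_i}^i$ in $G'$. Adjacency along $R^i$ forces consecutive edges $e_j^i,e_{j+1}^i$ to share an endpoint, while the induced property forces non-consecutive edges $e_j^i,e_l^i$ ($|j-l|\geq 2$) to share no endpoint. Writing the walk in $G'$ traced by this edge sequence, the key observation is that any repeated vertex would necessarily appear on two non-consecutive edges of the sequence, producing a chord in $R^i$ and contradicting the induced assumption; hence the sequence traces a simple path $Q^i$ in $G'$ from $s_i'$ to $t_i'$, and deleting the two pendant edges yields a simple $s_i$--$t_i$ path $P^i$ in $G$. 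Mutual-inducedness of $R^1,\ldots,R^k$ in $H$ translates directly to vertex-disjointness of $Q^1,\ldots,Q^k$ in $G'$, since any shared endpoint between edges on different $Q^i$'s would produce an edge of $H$ between distinct $R^i$'s.

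The main obstacle is the bookkeeping in the structural correspondence, particularly verifying that the endpoints of consecutive edges along an induced path in $L(G')$ form a sequence of pairwise distinct vertices and that these are distinct from the two ``outer'' pendant endpoints — so that the recovered walk is genuinely a simple path. All of this follows by the chord-avoidance argument sketched above and requires no delicate combinatorics. Combining the two directions yields the equivalence of the two instances, which together with NP membership establishes NP-completeness of {\sc Induced Disjoint Paths} on line graphs.
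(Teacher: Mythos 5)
Your reduction is correct and is in substance the same one the paper relies on: the paper simply cites the construction of Fiala et al.~\cite[Theorem~24]{FKLP12} (pendant edges attached to terminals, followed by taking the line graph) and checks that it works for the stricter variant where terminals must be independent, whereas you write that construction out in full together with the standard correspondence between induced paths in $L(G')$ and simple paths in $G'$; indeed the paper itself spells out the analogous construction explicitly in its proof for {\sc $2$-Induced Disjoint Connected Subgraphs} on line graphs. One small point you gloss over and the paper is careful about: your claim that the $2k$ pendant edges are pairwise non-incident (so that the new terminals are pairwise distinct and non-adjacent in $L(G')$) requires the $2k$ terminals of the {\sc Disjoint Paths} instance to be distinct vertices. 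This is not a real gap --- instances of {\sc Disjoint Paths} with coinciding terminals are trivially no-instances, and the paper instead notes that Lynch's reduction already produces distinct terminals --- but it deserves a sentence, since without it the constructed terminal pair collection need not be valid and the equivalence could fail vacuously.
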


\begin{proof}
Fiala et al.~\cite[Theorem~24]{FKLP12} prove that {\sc Induced Disjoint Paths} is \NP-complete for line graphs by reducing from {\sc Disjoint Paths} on general graphs. However, in the paper, they consider a more flexible variant where (among others) terminals can be adjacent. Fortunately, this extra freedom is not used in the reduction. We note that for {\sc Disjoint Paths}, the reduction of Lynch~\cite{Ly75} guarantees that the terminals are on disjoint vertices. Then the construction of~\cite{FKLP12} guarantees that the terminals in the constructed graph form an independent set and hardness for our variant follows.
\qed
\end{proof}

\begin{lemma}\label{l-line2}
{\sc $2$-Induced Disjoint Connected Subgraphs} is \NP-complete for the class of line graphs.
\end{lemma}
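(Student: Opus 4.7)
The plan is to reduce from {\sc $2$-Disjoint Connected Subgraphs}, which is \NP-complete on general graphs by the result of Hof, Paulusma and Woeginger~\cite{HPW09} cited just before the statement. Given an instance $(G,A_1,A_2)$ of that problem (with $A_1,A_2$ disjoint terminal sets), I construct $G'$ from $G$ by attaching, for every terminal $v\in A_1\cup A_2$, a fresh pendant vertex $v'$ joined to $v$ by a new edge $e_v=vv'$. The output instance of {\sc $2$-Induced Disjoint Connected Subgraphs} is the line graph $L(G')$ together with the two disjoint terminal sets $Z_i=\{v_{e_v}\mid v\in A_i\}$ for $i\in\{1,2\}$. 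Membership in \NP\ is immediate.

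The correctness rests on the standard bijection between vertex subsets $S\subseteq V(L(G'))$ and edge subsets $E_S\subseteq E(G')$. I would first verify the two elementary facts that drive everything: (i) $L(G')[S]$ is connected if and only if the edge set $E_S$ induces a connected subgraph of $G'$; and (ii) two subsets $S_1,S_2\subseteq V(L(G'))$ are mutually induced in $L(G')$ if and only if the subgraphs $(V(E_{S_1}),E_{S_1})$ and $(V(E_{S_2}),E_{S_2})$ of $G'$ are vertex-disjoint. Both are direct consequences of the definition of a line graph.

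For the forward direction, given mutually induced connected subgraphs $D^1\supseteq Z_1$ and $D^2\supseteq Z_2$ in $L(G')$, the corresponding edge-subgraphs $H'_1,H'_2$ of $G'$ are vertex-disjoint and connected. Because $e_v\in E(H'_i)$ for every $v\in A_i$, the subgraph $H'_i$ contains every vertex of $A_i$ and every corresponding pendant $v'$. Let $H_i$ be the subgraph of $G$ obtained by deleting the pendant vertices from $H'_i$; since each $v'$ has degree~$1$ in $G'$, no path inside $H'_i$ between two non-pendant vertices uses a pendant, so $H_i$ is still connected, contains $A_i$, and $H_1,H_2$ remain vertex-disjoint. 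Conversely, given vertex-disjoint connected subgraphs $H_1,H_2$ of $G$ with $A_i\subseteq V(H_i)$, I extend each $H_i$ to $G'$ by adjoining the pendant edges $\{e_v:v\in A_i\}$; each such edge shares the endpoint $v\in V(H_i)$ with $H_i$ (or constitutes the whole subgraph in the degenerate case $V(H_i)=\{v\}$, where $Z_i=\{v_{e_v}\}$ is already a connected singleton in $L(G')$), so the extended edge sets are connected. Via the bijection they give mutually induced connected subgraphs of $L(G')$ containing $Z_1$ and $Z_2$, as disjointness of $A_1$ and $A_2$ ensures that no pendant edge is shared and no endpoint is common.

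The only obstacle is really bookkeeping: handling the degenerate case $|A_i|=1$, checking that the $e_v$ across $Z_1$ and $Z_2$ never share endpoints (guaranteed by $A_1\cap A_2=\emptyset$ plus the fact that pendants are private to each $v$), and confirming that contracting out the pendants does not disconnect $H_i'$. All of these are straightforward, so the reduction is a polynomial-time many-one reduction and gives the claimed \NP-completeness on line graphs. \qed
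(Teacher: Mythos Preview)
Your proposal is correct and follows essentially the same construction as the paper: attach a private pendant edge to each terminal, pass to the line graph, and take the pendant-edge vertices as the new terminal sets. You supply considerably more justification for the equivalence (facts (i) and (ii) and the degenerate case $|A_i|=1$) than the paper does, but the reduction itself is identical.
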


\begin{proof}
We reduce from {\sc $2$-Disjoint Connected Subgraphs}, which is known to be \NP-complete~\cite{HPW09}. 
We describe a reduction that is similar to Fiala et al.~\cite[Theorem~24]{FKLP12} for {\sc Induced Disjoint Paths}.

Let $(G, Z_1, Z_2)$ be an instance of {\sc $2$-Disjoint Connected Subgraphs}. For each vertex $z \in Z_1 \cup Z_2$, create a new vertex $v_z$ and connect it by an edge $e_z$ to $z$. Let $G'$ denote the new graph. Note that $(G,Z_1,Z_2)$ is a yes-instance of {\sc $2$-Disjoint Connected Subgraphs} if and only if $(G', \{v_z \mid z \in Z_1\}, \{v_z \mid z \in Z_2\})$ is a yes-instance. Now consider the line graph $L(G')$. For each $z \in Z_1 \cup Z_2$, let $v(e_z)$ be the vertex in $L(G')$ corresponding to $e_z$. Then it can be readily seen that $(G,Z_1,Z_2)$ is a yes-instance of {\sc $2$-Disjoint Connected Subgraphs} if and only if $(L(G'), \{v(e_z) \mid z \in Z_1\}, \{v(e_z) \mid z \in Z_2\})$ is a yes-instance of {\sc $2$-Induced Disjoint Connected Subgraphs}.
\qed
\end{proof}

\subsection{Forbidding Some Linear Forest}

Finally, we show two lemmas for graphs without certain induced linear forests. Lemma~\ref{l-3p2p7} shows that {\sc $2$-Induced Disjoint Connected Subgraphs} is \NP-complete for $(3P_2,P_7)$-free graphs. It is readily seen that the gadget constructed in the hardness reduction is not $2P_4$-free. Note that this is in line with Theorem~\ref{thm:k-IDCS}.
However, Lemma~\ref{l-2p4npc} shows that \NP-completeness does hold for $2P_4$-free graphs when the number $k$ of terminal sets is part of the input. That is, {\sc Induced Disjoint Connected Subgraphs} is \NP-complete for $2P_4$-free graphs.

\begin{figure}[tbp]
\[
\xymatrix{
C' &  \bullet \ar@{-}[dr] & \bullet & \cdots & \bullet & \bullet \ar@{-}[dl] \\
x' &  \bullet \ar@{=}[rrr] \ar@{-}[d]  & \bullet \ar@{-}[d] & \cdots & \bullet \ar@{-}[d] & \\
x &   \bullet \ar@{=}[rrr]  & \bullet & \cdots & \bullet & \\
C &   \bullet \ar@{-}[ur] & \bullet & \cdots & \bullet & \bullet \ar@{-}[ul] \\
}
\]
\caption{Connections between cliques in the proof of Lemma~\ref{l-3p2p7}. The horizontal double lines indicate these vertices are joined in a clique. In the diagram we show variable $x_2$ appearing in clause $C_1$ and variable $x_n$ appearing in clause $C_m$.}
\label{fig:disjoint-induced-paths-hardness}
\end{figure}
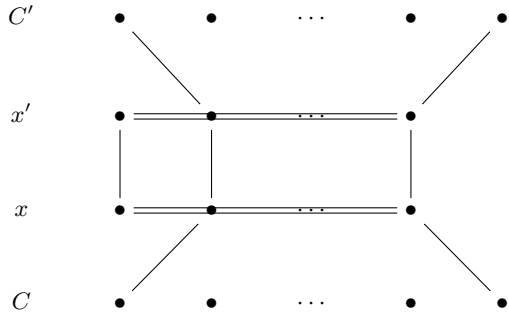

\begin{lemma}\label{l-3p2p7}
{\sc $2$-Induced Disjoint Connected Subgraphs} is \NP-complete for the class of ($3P_2,P_7)$-free graphs.
\end{lemma}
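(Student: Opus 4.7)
The plan is to reduce from (a restricted version of) 3-SAT to $2$-{\sc Induced Disjoint Connected Subgraphs}, following the layered layout sketched in Figure~\ref{fig:disjoint-induced-paths-hardness}. Given a 3-CNF formula $\phi$ with variables $x_1,\ldots,x_n$ and clauses $C_1,\ldots,C_m$, I would build a graph consisting of four groups of vertices arranged in ``rows'': a clause-clique $C$ at the bottom (one vertex per clause), a ``positive-literal'' clique $x$ (one vertex per variable), a ``negative-literal'' clique $x'$ (one vertex per variable), and a second clause-clique $C'$ at the top. Each vertex of $x$ is joined to its twin in $x'$ by a vertical rung edge. A clause vertex $c_j\in C$ is joined to the $x$-vertex of each variable occurring positively in $C_j$, and analogously $c_j'\in C'$ is joined to the $x'$-vertex of each variable occurring negatively in $C_j$. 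To enforce non-trivial traversal, I would take $Z_1 = \{s_1,t_1\}$ to be two designated endpoints attached to $C$ and $Z_2=\{s_2,t_2\}$ to be two endpoints attached to $C'$ (with pendants added if needed so that every solution is forced to absorb the entire clause row on its side).

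Correctness follows a standard template. Given a satisfying assignment $\alpha$, let $D^1$ consist of $C$ together with exactly those vertices of $x$ whose literal is true under $\alpha$, and let $D^2$ consist of $C'$ together with those vertices of $x'$ whose literal is true. Twin-rung edges never cross between $D^1$ and $D^2$, since no variable is simultaneously true positively and true negatively, and each clause vertex $c_j\in C$ (resp.\ $c_j'\in C'$) has at least one selected neighbour in $x$ (resp.\ $x'$) because $\alpha$ satisfies $C_j$. Conversely, from a solution $(D^1,D^2)$ one reads off a truth assignment by recording which twin of each variable belongs to $D^1$ (breaking ties arbitrarily); the mutual-inducedness across the rungs forces consistency, and the requirement that every clause vertex in $C$ be reached by $D^1$ forces each clause to contain a true literal.

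The main obstacle is verifying that the construction is $(3P_2,P_7)$-free. For $3P_2$-freeness, each of the four rows is a clique, so any two edges lying in the same row have either a common endpoint or adjacent endpoints through that clique; consequently three pairwise non-adjacent edges must straddle at least three distinct rows, and a short case analysis over which pair of rows each edge connects (using that the only inter-row edges are the $C$-to-$x$ edges, the rungs, and the $x'$-to-$C'$ edges) produces an adjacency in every case. For $P_7$-freeness, an induced path uses at most two vertices from any one clique row (one to enter, one to leave), and the permissible inter-row transitions form a length-$3$ backbone $C$--$x$--$x'$--$C'$, so an induced path can be bounded by at most six vertices. The delicate part will be balancing the sizes of the clique rows and the precise twin-edge pattern so that both forbidden-subgraph conditions hold simultaneously while keeping the reduction faithful; in particular one must ensure that no shortcut edges between $C$ and $x'$ or between $C'$ and $x$ are introduced, as these could either elongate an induced path past six vertices or spoil the clause/variable semantics.
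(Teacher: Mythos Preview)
Your layout has the right shape, but two design choices break the construction.

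First, making the clause rows $C$ and $C'$ into cliques destroys both forbidden-subgraph properties. For $3P_2$-freeness: take an edge $c_1c_2$ inside the clique $C$, an edge $c'_3c'_4$ inside the clique $C'$, and a rung $v_av'_a$ for any variable $a$ that does not occur (with the relevant sign) in any of $C_1,C_2,C_3,C_4$. Since your construction has no $C$--$x'$, $C$--$C'$ or $x$--$C'$ edges, these three $P_2$'s are pairwise anticomplete. Your sentence ``three pairwise non-adjacent edges must straddle at least three distinct rows'' overlooks edges lying entirely inside a single clique row. For $P_7$-freeness: the path $c_1\,c_2\,v_a\,v_b\,v'_b\,v'_c\,c'_d\,c'_e$ is induced for generic choices (each clique row contributes two consecutive vertices), so the graph contains an induced $P_8$. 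Your bound ``at most six vertices'' tacitly allows only one vertex per row on the backbone, not two.

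Second, with $C$ a clique and $|Z_1|=2$, the reduction is vacuous: $D^1$ can connect $s_1$ to $t_1$ through a single clause vertex and never touch the variable row, so no truth assignment is encoded. The appeal to ``pendants added if needed so that every solution is forced to absorb the entire clause row'' cannot repair this while $C$ remains a clique and $Z_1$ stays a pair.

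The paper fixes both issues at once. Only the two variable rows $X,X'$ are cliques; the clause rows $C,C'$ are \emph{independent sets}, and one takes $Z_1=C$ and $Z_2=C'$ in full. Independence of $C$ then forces each clause vertex to be connected into $D^1$ through some $X$-vertex (and symmetrically on the $C'$ side), which is exactly the semantics of {\sc Not-All-Equal-$3$-Sat} rather than ordinary $3$-SAT: one adds the edge $v_ic_j$ (and $v'_ic'_j$) whenever the \emph{variable} $x_i$ occurs in $C_j$, irrespective of sign, and a solution must pick at least one true and at least one false variable per clause. With $C$ and $C'$ independent, every edge of $G$ meets $X\cup X'$, so $3P_2$-freeness is immediate from those two cliques; and any induced path using two vertices of $C$ (or of $C'$) is trapped at three vertices, which caps the longest induced path at six vertices overall.
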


\begin{proof}
We reduce from {\sc Not-All-Equal-$3$-Sat}, known to be NP-complete~\cite{Sc78}. Let $({\cal X},{\cal C})$ be an instance of {\sc Not-All-Equal-$3$-Sat} containing $n$ variables $x_1,\ldots,x_n$ and $m$ clauses $C_1,\ldots,C_m$. We construct a graph $G$ as follows. Let $X$ be a clique of size $n$ on vertices $v_1,\ldots,v_n$. Introduce a copy $v'_i$ of each $v_i$ in $X$. Call the new set $X'$ and make it a clique. Add the edges $v_iv_i'$ for each $v_i$ in $X$. Let $C$ be an independent set of size $m$ on vertices $c_1,\ldots,c_m$. Introduce a copy $c'_j$ of each vertex $c_j$ in $C$. Call the new set $C'$ (and keep it an independent set). Now for all $1 \leq i \leq n$ and $1 \leq j \leq m$, add an edge $v_ic_j$ and an edge $v_i'c_j'$ if clause $C_j$ contains variable $x_i$. Set $Z_1=C$ and $Z_2=C'$. Then, $(G,Z_1,Z_2)$ is an instance of {\sc $2$-Induced Disjoint Connected Subgraphs}. See Figure~\ref{fig:disjoint-induced-paths-hardness}.

Observe that $G$ is $P_7$-free. Indeed, let $P$ be any longest induced path in $G$. Then $P$ can contain at most two vertices from $X$ and at most two vertices from $X'$. If $P$ contains at most one vertex from $C$ and at most one vertex from $C'$, then $P$ has length at most $2 + 2 + 1+ 1 = 6$.  On the other hand, if $P$ contains two vertices from $C$ or two vertices from $C'$, then $P$ has length at most $3$.

We also observe that $G$ is $3P_2$-free, as any $P_2$ must contain at least one vertex from $X$ or from $X'$, and $X$ and $X'$ are cliques.
So we are done after proving the following claim: $({\cal X},{\cal C})$ is a yes-instance of {\sc Not-All-Equal-$3$-Sat} if and only if $(G,Z_1,Z_2)$ is a yes-instance of {\sc $2$-Induced Disjoint Connected Subgraphs}.

In the forward direction, let $\tau$ be a satisfying truth assignment. We put in $A$ every vertex of $X$ for which the corresponding variable is set to true. We put in $A'$ every vertex of $X'$ for which the corresponding variable is set to false. 
As each clause $C_j$ contains at least one true variable, $c_j$ is adjacent to a vertex in $A$. Similarly, each clause $C_j$ contains at least one false variable, so each $c'_j$ is adjacent to a vertex in $A'$. 
As $X$ and $X'$ are cliques, $A$ and $A'$ are cliques. Hence, $G[C \cup A]$ and $G[C' \cup A']$ are connected. 

Now suppose there is an edge between a vertex of $C \cup A$ and a vertex of $C' \cup A'$. Then, by construction, this edge must be equal to some $v_iv_i'$, which means that $v_i$ is in $A$ and $v_i'$ is in $A'$, so $x_i$ must be true and false at the same time, a contradiction. Hence, there exists no edge between a vertex from $C \cup A$ and a vertex from $C' \cup A'$. We conclude that $(C \cup A, C' \cup A')$ is a solution.

In the backwards direction, let $(C \cup A, C' \cup A')$ be a solution. Then, by definition, there is no edge between $C \cup A$ and $C' \cup A'$, which means that there is no edge between $A$ and $A'$. Then $A \subseteq X$ and $A' \subseteq X'$, since $X$ and $X'$ are cliques and $A$ ($A'$) needs to contain at least one vertex of $X$ ($X'$). Also, there is no variable $x_i$ such that $v_i$ is in $A$ and $v'_i$ is in $A'$. This means we can define a truth assignment $\tau$ by setting all variables corresponding to vertices in $A$ to be true, all variables corresponding to vertices in $A'$ to be false, and all remaining vertices in ${\cal X}$ to be true (or false, it does not matter).

As $C$ is an independent set and $C \cup A$ is connected, each $c_j$ has a neighbour in $A$. So each $C_j$ contains a true literal. As $C'$ is an independent set and $C' \cup A'$ is connected, each $c'_j$ has a neighbour in $A'$. So each $C_j$ contains a false literal. Hence, $\tau$ is a satisfying truth assignment. This completes the proof.
\qed
\end{proof}

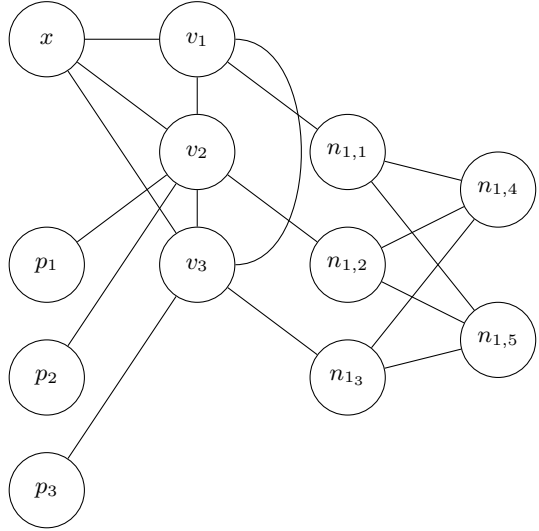
\begin{figure} [tbp]
\begin{center}
\begin{tikzpicture}[main_node/.style={circle,draw,minimum size=1cm,inner sep=3pt]}]
\node[main_node](x) at (0,0){$x$};
\node[main_node](p1) at (0, -3){$p_1$};
\node[main_node](p2) at (0,-4.5){$p_2$};
\node[main_node](p3) at (0,-6){$p_3$};
\node[main_node](v1) at (2,0){$v_1$};
\node[main_node](v2) at (2,-1.5){$v_2$};
\node[main_node](v3) at (2,-3){$v_3$};
\node[main_node](n11) at (4,-1.5){$n_{1,1}$};
\node[main_node](n12) at (4,-3){$n_{1,2}$};
\node[main_node](n13) at (4,-4.5){$n_{1_3}$};
\node[main_node](n14) at (6, -2){$n_{1,4}$};
\node[main_node](n15) at (6, -4){$n_{1,5}$};
\draw(n11)--(v1);
\draw(n12)--(v2);
\draw(n13)--(v3);
\draw(n13)--(n14);
\draw(n13)--(n15);
\draw(n12)--(n14);
\draw(n12)--(n15);
\draw(n11)--(n14);
\draw(n11)--(n15);
\draw(v2)--(p1);
\draw(v2)--(p2);
\draw(v3)--(p3);
\draw(x)--(v1);
\draw(x)--(v2);
\draw(x)--(v3);
\draw(v1)--(v2)--(v3);
\draw(v1) [out=0, in=0]to (v3);
\end{tikzpicture}
\end{center}
\caption{The graph constructed in the proof of Lemma~\ref{l-2p4npc} corresponding to an instance of {\sc Monotone $3$-Satisfiability} with three positive clauses and one negative clause $ (\neg x_1 \lor \neg x_2 \lor \neg x_3)$.}\label{f-label}
\end{figure}

\begin{lemma}\label{l-2p4npc}
{\sc Induced Disjoint Connected Subgraphs} is \NP-complete for the class of $2P_4$-free graphs.
\end{lemma}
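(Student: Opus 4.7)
The plan is to reduce from {\sc Monotone $3$-Satisfiability} (3-SAT in which every clause is either all positive or all negative), which is NP-complete. Given an instance with variables $x_1,\ldots,x_n$, positive clauses $P_1,\ldots,P_{m_+}$ and negative clauses $N_1,\ldots,N_{m_-}$, I construct the graph $G$ depicted in Figure~\ref{f-label}: introduce a vertex $x$ and variable vertices $v_1,\ldots,v_n$, turn $K:=\{x,v_1,\ldots,v_n\}$ into a clique, attach for each positive clause $P_j$ a vertex $p_j$ adjacent exactly to those $v_i$ with $x_i\in P_j$, and for each negative clause $N_j$ on variables $x_{a_1},x_{a_2},x_{a_3}$ add a gadget $n_{j,1},\ldots,n_{j,5}$ in which $n_{j,\ell}$ is adjacent to $v_{a_\ell}$ for $\ell\in\{1,2,3\}$ while $\{n_{j,1},n_{j,2},n_{j,3}\}$ together with $\{n_{j,4},n_{j,5}\}$ induce a copy of $K_{3,2}$ with no other internal edges. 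Set $Z_0:=\{x,p_1,\ldots,p_{m_+}\}$ and, for each negative clause, $Z_j:=\{n_{j,4},n_{j,5}\}$; without loss of generality $m_+\ge 1$ so that $|Z_0|\ge 2$.

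For correctness, given a satisfying assignment $\tau$, let $D^0$ be induced on $\{x\}\cup\{p_1,\ldots,p_{m_+}\}\cup\{v_i:\tau(x_i)=1\}$, which is connected because the chosen $v_i$'s form a clique with $x$ and every positive clause contains a true literal; for each negative clause $N_j$ pick $\ell$ with $\tau(x_{a_\ell})=0$ and let $D^j:=G[\{n_{j,\ell},n_{j,4},n_{j,5}\}]$, a connected induced $P_3$ with $n_{j,\ell}$ in the middle. The only possible edges between $D^0$ and some $D^j$ would pass through $v_{a_\ell}$-type vertices, which by the choice of $\ell$ lie outside $D^0$. Conversely, in any solution $(D^0,D^1,\ldots,D^{m_-})$, the vertex $x\in V(D^0)$ is adjacent in $K$ to every $v_i$, so no $D^j$ with $j\ge 1$ can contain a $v$-vertex; forced to stay inside the $j$-th gadget, $D^j$ must use some $n_{j,\ell}$ to connect $n_{j,4}$ to $n_{j,5}$, and the neighbour $v_{a_\ell}$ of that $n_{j,\ell}$ must then be absent from $D^0$. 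Setting $\tau(x_i):=1$ iff $v_i\in V(D^0)$ therefore satisfies every $N_j$, and (since each $p_j\in V(D^0)$ must be joined to the rest of $D^0$ via some $v_i$ with $x_i\in P_j$) also every $P_j$.

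The main obstacle is verifying that $G$ is $2P_4$-free, and this reduces to one clean structural observation: $G-K$ is $P_4$-free. Indeed, after deleting $K$ every $p_j$ is isolated (its only neighbours were in $K$), while each negative-clause gadget restricts to an induced $K_{3,2}$, whose longest induced path has only three vertices. Hence every induced $P_4$ of $G$ must meet $K$. If $G$ contained an induced $2P_4$ with parts $A$ and $B$, each of $A$ and $B$ would contain a vertex of the clique $K$ and these two vertices would be distinct; being in the clique $K$ they are joined by an edge, which runs between $A$ and $B$ and contradicts the definition of an induced $2P_4$. Thus $G$ is $2P_4$-free, and the reduction produces an equivalent instance of {\sc Induced Disjoint Connected Subgraphs} on $2P_4$-free graphs, as required.
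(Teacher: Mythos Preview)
Your proof is correct and follows essentially the same reduction as the paper: Monotone {\sc $3$-Satisfiability}, the same clique of variable vertices together with $x$, pendant positive-clause vertices, and $K_{3,2}$ gadgets for negative clauses, with the same terminal sets. Your $2P_4$-freeness argument (every induced $P_4$ meets the clique $K=\{x,v_1,\ldots,v_n\}$, hence two disjoint induced $P_4$'s would yield an edge between them) is a slightly crisper packaging of the paper's argument, but the content is the same.
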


\begin{proof}
We reduce from {\sc Monotone $3$-Satisfiability}~\cite{Li97}. Let $\Phi$ be an instance of {\sc Monotone $3$-Satisfiability} with $n$ variables $v_1, \dots ,v_n$, $l$  clauses with only positive literals $P_1, \dots, P_l$ and $m$ clauses with only negated literals $N_1, \dots, N_m$. We define an instance $(G, Z_1, \dots Z_{m+1})$ of {\sc Induced Disjoint Connected Subgraphs} as follows (see also Fig.~\ref{f-label}).

\begin{itemize}
	\item Add a clique, with one vertex $v_i$ for each variable.
	
	\item Add an independent set consisting of one vertex $p_i$ for each positive clause together with one further vertex $x$ adjacent to every variable vertex.
	
	\item Add an edge between each vertex $p_i$ and the variable vertices contained in the corresponding clause $P_i$.
	
	\item For each negative clause add a complete bipartite graph $K_{2,3}$ where the vertices contained in the part of size~$3$, $n_{i,1}$, $n_{i,2}$ and $n_{i,3}$, represent the literals of the clause $N_i$, whilst the vertices contained in the part of size~$2$ are denoted as $n_{i,4}$ and $n_{i,5}$.
	
	\item Add edges from each literal vertex $n_{i,j}, 1 \leq j \leq 3$ to the corresponding variable vertex.
	
	\item Let $Z_1$ consist of each positive clause vertex $p_i$ together with the vertex $x$.
	
	\item Let $Z_i$ consist of the two vertices $n_{i-1,4}$ and $n_{i-1,5}$ for $2 \leq i \leq m+1$.
\end{itemize}
	
We first show that $G$ is $2P_4$-free. Note that at most one of the two paths in an induced $2P_4$ contains any variable vertex. Since every neighbour of the vertices $\{p_1 \dots p_l, x\}$ is a variable vertex, none of these vertices is contained in an induced $P_4$ which excludes variable vertices. As the complete bipartite graph $K_{2,3}$ is $P_4$-free, this implies that $G$ is $2P_4$-free.
		
	Next we show that $G$ is a yes-instance of {\sc Induced Disjoint Connected Subgraphs} if and only if $\Phi$ is a yes-instance of {\sc Monotone 3-satisfiability}. Given a satisfying assignment of $\Phi$, let $S_1=Z_1 \cup T$ where $T$ is the set of variable vertices corresponding to true variables. For $2 \leq i \leq m+1$, let $S_i=Z_i \cup F_i$ where $F_i$ is the set of literal vertices $n_{{i-1},j}$ adjacent to variable vertices appearing in $N_{i-1}$ which are assigned to be false. Note that no subgraph $S_i, 2 \leq i \leq m+1$ contains a variable vertex. $S_1$ is connected since at least one variable appearing in each positive clause must be true in any satisfying assignment. Similarly each $S_i$ is connected for $2 \leq i \leq m+1$ since any negative clause must contain at least one variable which is assigned to be false. Any edge between $S_i$ and $S_j$ for $ i \neq j$ must contain a variable vertex since the remaining edges are those contained in copies of $K_{2,3}$ and hence either have two endpoints in the same subgraph $S_i$ or one endpoint  contained in no subgraph $S_i$. Therefore we may assume that $i=1$. If a variable vertex $v_i$ is contained in $S_1$ and has a neighbour in a second subgraph $S_j$ it must be both true and false in a satisfying assignment, a contradiction.
	
	 If $(G, Z_1 \dots Z_{m+1})$ is a yes-instance of {\sc Induced Disjoint Connected Subgraphs}, consider any solution $(S_1,\ldots,S_{m+1})$ such that $Z_i \subseteq S_i$ for $1 \leq i \leq m+1$. Note that $\{n_{i-1,1}, n_{i-1,2}, n_{i-1,3} \mid 2 \leq i \leq m+1\} \subseteq N(Z_2 \cup \cdots \cup Z_{m+1})$, and thus $S_1 \subseteq Z_1 \cup \{v_1,\ldots,v_n\}$. Since the variable vertices form a clique and $S_1$ will need to contain at least one variable vertex, $(S_2 \cup \cdots \cup S_{m+1}) \cap \{v_1,\ldots,v_n\} = \emptyset$. Set each variable whose corresponding vertex is contained in $S_1$ to true and each remaining variable to false. We claim this yields a satisfying assignment for $\Phi$. For a positive clause $P_i$, note that $S_1$ must connect $p_i$ to $x$, which is only possible if a variable vertex adjacent to $p_i$ is in $S_1$. This variable is contained in the clause and set to true, and will thus satisfy the clause. For a negative clause $N_{i-1}$ with $2 \leq i \leq m+1$, we note that $Z_i = \{n_{i-1,4}, n_{i-1,5}\}$ is connected by $S_i$, which is only possible if one of $n_{i-1,1}, n_{i-1,2}, n_{i-1,3}$ is in $S_i$, say $n_{i-1,1}$. But then the variable vertex corresponding to the first literal of the clause cannot be in $S_1$, and thus is set to false and satisfies the clause.
\qed\end{proof}

\section{The Proofs of Theorems~\ref{thm:IDPnew}--\ref{thm:k-IDCS}}\label{s-proofs}

We are now ready to prove Theorems~\ref{thm:IDPnew}--\ref{thm:k-IDCS}, which we restate below.

\medskip
\noindent
{\bf Theorem~\ref{thm:IDPnew} (restated).}
{\it Let $\ell\geq 2$. For a graph $H$, {\sc Induced Disjoint Connected $\ell$-Subgraphs} on $H$-free graphs is polynomial-time solvable if  
$H \ssi sP_3+P_6$ for some $s\geq 0$; \NP-complete if $H$ is not a linear forest; and quasipolynomial-time solvable otherwise.}

\begin{proof}
We prove the theorem for $\ell=2$; extending the proof to $\ell\geq 3$ is trivial.
If $H$ contains a cycle $C_s$, then we use Lemma~\ref{l-girth} by setting the girth to $g=s+1$.
Suppose that $H$ contains no cycle, that is, $H$ is a forest.
If $H$ contains a vertex of degree at least~$3$, then we use Lemma~\ref{l-line}, as in that case the class of $H$-free graphs contains the class of $K_{1,3}$-free graphs, which in turn contains the class of line graphs.
In the remaining cases, $H$ is a linear forest.
If $H\ssi sP_3+P_6$ for some $s\geq 0$ we use Lemma~\ref{l-sp6}.
Else we use Lemma~\ref{l-quasi}. \qed
\end{proof}

\noindent
{\bf Theorem~\ref{thm:IDCS} (restated).}
{\it For a graph $H$ such that $H\neq sP_1+P_6$ for some $s\geq 0$, {\sc Induced Disjoint Connected Subgraphs} on $H$-free graphs
is polynomial-time solvable for $H$-free graphs if $H \ssi sP_1+P_3+P_4$ or $H \ssi sP_1+ P_5$ for some $s\geq 0$, and it is \NP-complete otherwise.}

\begin{proof}
If $H$ is not a linear forest, we use Theorem~\ref{thm:IDPnew}. Suppose $H$ is a linear forest.
If $H\ssi sP_1+P_5$ for some $s\geq 0$ we use Lemma~\ref{l-p5}.
If $H\ssi sP_1+P_3+P_4$ for some $s\geq 0$ we use Lemma~\ref{l-sp1c}.
If $3P_2\ssi H$ or $P_7\ssi H$ we use Lemma~\ref{l-3p2p7}.
Otherwise $2P_4\ssi H$ and we use Lemma~\ref{l-2p4npc}. \qed
\end{proof}

\noindent
{\bf  Theorem~\ref{thm:k-IDCS} (restated).}
{\it Let $k\geq 2$. For a graph $H$, {\sc $k$-Induced Disjoint Connected Subgraphs} on $H$-free graphs is polynomial-time solvable for $H$-free graphs if $H \ssi sP_1+2P_4$ or $H \ssi sP_1+ P_6$ for some $s\geq 0$, and it is \NP-complete otherwise.}

\begin{proof}
If $H$ contains a cycle $C_s$, then we use Lemma~\ref{l-girth2} by setting the girth to $g=s+1$.
Suppose that $H$ contains no cycle, that is, $H$ is a forest.
If $H$ contains a vertex of degree at least~$3$, then we use Lemma~\ref{l-line2}, as in that case the class of $H$-free graphs contains the class of $K_{1,3}$-free graphs, which in turn contains the class of line graphs.
In the remaining cases, $H$ is a linear forest.
If $H\ssi sP_1+P_6$ for some $s\geq 0$ we use Lemma~\ref{l-kp6}.
If $H\ssi sP_1+2P_4$ for some $s\geq 0$ we use Lemma~\ref{l-2p4}.
Otherwise we have that $3P_2\ssi H$ or $P_7\ssi H$ and we use Lemma~\ref{l-3p2p7}. \qed
\end{proof}

\section{A Slight Problem Generalization}\label{s-pro}

In this section we consider a more general variant of the problem. So far, we required that the terminals must all form an independent set.
This condition has been relaxed in some papers in the literature, such as~\cite{LLMT09} (see also Section~\ref{s-intro}).
Given a graph~$G$, we say that \emph{vertex-disjoint} paths $P^1,\ldots, P^k$, for some integer $k\geq 1$, with set $R$ of endpoints are {\it flexibly mutually induced paths} of $G$ if there exists a set $S\subseteq V\setminus R$ such that $G[S \cup R]=(P^1+\ldots +P^k) \cup G[R]$. So, there is no edge between two vertices from different paths $P^i$ and $P^j$ except possibly between the endpoints of the paths.
We can now define the following decision problem:

\problemdef{Flexibly Induced Disjoint Paths}{a graph $G$ and terminal pair collection $T=\{(s_1,t_1)\ldots,(s_k,t_k)\}$.}{does $G$ have a set of flexibly mutually induced paths $P^1,\ldots, P^k$ such that $P^i$ is an $s_i$-$t_i$ path for $i\in \{1,\ldots,k\}$?}

\noindent
Requiring terminals to form an independent set is crucial for our quasipolynomial results. Namely, Theorem~\ref{thm:IDPnew} is unlikely to hold in the relaxed setting, as shown below.

\begin{figure}[tb]
\begin{center}
\includegraphics[scale=0.56]{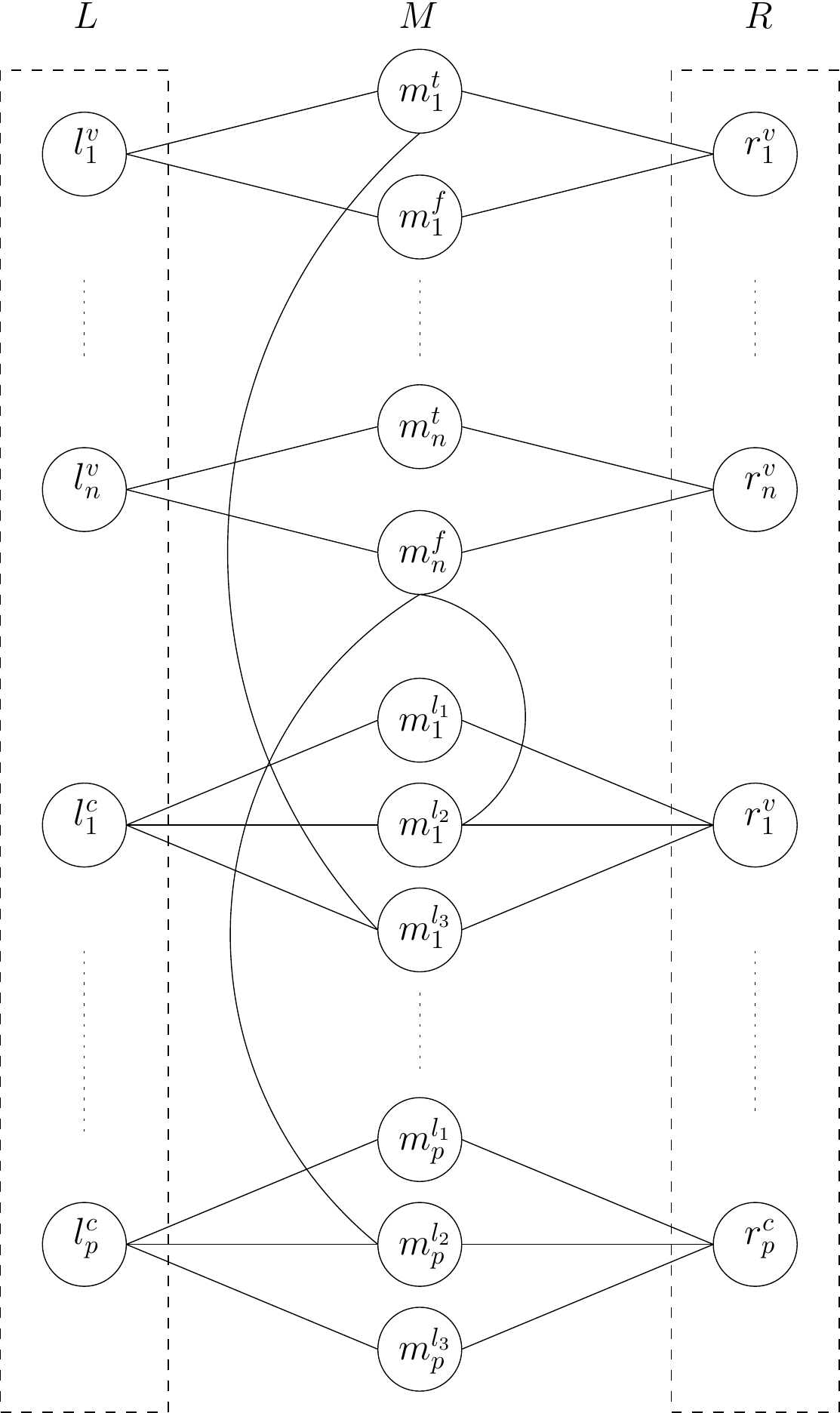}
\end{center}
\caption{The hardness construction for {\sc Flexibly Induced Disjoint Paths} on $P_{14}$-free graphs. The sets $L$ and $R$ are cliques; the corresponding edges are not drawn. The three curved edges between $m$-vertices correspond to an instance of {\sc $3$-Satisfiability} where variable $v_1$ occurs negatively in $c_1$ (as the third literal), variable $v_n$ occurs positively in $c_1$ (as the second literal) and positively in $c_p$ (as the second literal).}
\label{fig:flex-idp-construct}
\end{figure}

\begin{theorem}\label{t-p14}
The {\sc Flexibly Induced Disjoint Paths} problem is \NP-complete for the class of $P_{14}$-free graphs.
\end{theorem}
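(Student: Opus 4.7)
The plan is to reduce from {\sc $3$-Satisfiability} in polynomial time; membership in \NP\ is immediate since a candidate family of paths can be verified in polynomial time. Given a $3$-CNF formula $\Phi$ with variables $v_1,\ldots,v_n$ and clauses $c_1,\ldots,c_p$, I would build a graph $G$ with a terminal pair collection $T$ essentially as suggested by Figure~\ref{fig:flex-idp-construct}. The vertex set of $G$ consists of two cliques $L$ and $R$, short ``wire'' gadgets linking $L$ to $R$, and one ``literal'' vertex for each literal occurrence. The crucial structural feature is that since $L$ and $R$ are cliques, any induced path of $G$ contains at most two vertices of $L$ and at most two of $R$, which will ultimately cap the length of an induced path below fourteen.

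For each variable $v_i$ I would place a source $s_i\in L$ and a sink $t_i\in R$ and link them by two internally vertex-disjoint constant-length wires, a \emph{true-wire} and a \emph{false-wire}. For each clause $c_j$ I would place $u_j\in L$, $w_j\in R$ and three literal vertices $m_{j,1},m_{j,2},m_{j,3}$, each made adjacent to both $u_j$ and $w_j$ so that the pair $(u_j,w_j)$ can be routed through exactly one literal vertex. Finally, for each occurrence of $v_i$ as the $\ell$-th literal of $c_j$, I would attach $m_{j,\ell}$ to the wire of $v_i$ \emph{opposite} to the truth value that satisfies this literal (positive literals to the false-wire, negative literals to the true-wire). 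The terminal pair collection is $T=\{(s_i,t_i):1\leq i\leq n\}\cup\{(u_j,w_j):1\leq j\leq p\}$, and this collection is disjoint by construction.

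I would then prove two directions of equivalence. From a satisfying assignment, route each $(s_i,t_i)$ through the wire matching $v_i$'s truth value, and route each $(u_j,w_j)$ through some $m_{j,\ell}$ corresponding to a satisfied literal; by construction $m_{j,\ell}$ is attached only to the \emph{unused} wire of its variable, so the resulting paths are flexibly mutually induced (their only allowed adjacencies being among endpoints inside the cliques $L$ and $R$). Conversely, a solution picks exactly one wire per variable, which defines a truth assignment, and each clause path must traverse some $m_{j,\ell}$ whose variable wire is free, forcing the corresponding literal to be true. For the $P_{14}$-free verification I would perform a case analysis on how an induced path can traverse the construction: at each end it can pick up at most two vertices of $L$ (resp.\ $R$), and in between it can follow at most one variable wire plus possibly one literal vertex that bridges to a second wire, before the clique structure of $L\cup R$ forces it to terminate.

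The main obstacle I anticipate is exactly this $P_{14}$-free verification. Because vertex pairs inside $L$ (respectively $R$) can be adjacent freely, and because each literal vertex $m_{j,\ell}$ touches $L$, $R$, and two distinct variable wires, there are many potential ``zig-zag'' induced paths through the construction. The bookkeeping requires calibrating the wire lengths and the exact adjacency pattern of $m$-vertices to wires so that every such zig-zag stays strictly shorter than fourteen vertices while simultaneously preserving soundness and completeness of the SAT encoding. The constant $14$ is presumably tight for a construction of this shape: a worst-case induced path traverses two vertices of $L$, a full variable wire, a literal vertex, a second full variable wire, and two vertices of $R$, and the wire lengths must be chosen just small enough to keep this count below fourteen while still allowing routing to encode the variable assignment.
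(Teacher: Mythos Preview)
Your approach is essentially the paper's: reduce from {\sc $3$-Satisfiability}, make $L$ and $R$ cliques, give each variable pair two single-vertex ``wires'' $m^t_i,m^f_i$ (true/false), give each clause three literal vertices each adjacent to its clause terminals, and attach each literal vertex to the \emph{opposite} variable wire. Your soundness/completeness arguments are exactly those of the paper.

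The one genuine gap is in your $P_{14}$-freeness sketch. Your stated worst case (``two vertices of $L$, a full variable wire, a literal vertex, a second full variable wire, and two vertices of $R$'') underestimates the longest induced path because it uses $L$ and $R$ only at the two ends. In fact an induced path may use \emph{both} cliques as internal pivots: it can run through three consecutive $M$-vertices (literal--variable--literal), then two vertices of $L$, then three more $M$-vertices, then two vertices of $R$, then three more $M$-vertices, for a total of $3+2+3+2+3=13$. This is the actual worst case, and it is the reason the wires must have length exactly one (i.e.\ be single vertices $m^t_i,m^f_i$); any longer ``constant-length wire'' would push the count past $13$. So: fix the wires to single vertices, observe that a literal $m$-vertex has degree~$3$ (its clause terminals in $L$ and $R$, plus one variable $m$-vertex), hence any maximal run inside $M$ has at most three vertices, and then the $3{+}2{+}3{+}2{+}3$ bound follows.
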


\begin{proof}
We reduce from {\sc $3$-Satisfiability}, which is well known to be NP-hard. Let $\Phi$ be an instance of {\sc $3$-Satisfiability} with $n$ variables $v_1, \ldots, v_n$ and $p$ clauses $c_1, \ldots, c_p$. We may assume that each variable occurs at most once in each clause. Create a set $L$ of $n+p$ vertices, denoted $l^v_1,\ldots,l^v_n,l^c_1,\ldots,l^c_p$, and a set $R$ of $n+p$ vertices, denoted $r^v_1,\ldots,r^v_n,r^c_1,\ldots,r^c_p$. We make $L$ into a clique and $R$ into a clique. For each variable $v_i$, create two vertices $m^t_i$ and $m^f_i$, which we both make adjacent to $l^v_i$ and $r^v_i$. For each clause $c_j$ and each literal $\ell$ of $c_j$, create a new vertex $m^\ell_j$, which we make adjacent to both $l^c_j$ and $r^c_j$. If $\ell$ is the negation of variable $v_i$, make $m^\ell_j$ adjacent to $m^t_i$; if $\ell$ is variable $v_i$, make $m^\ell_j$ adjacent to $m^f_i$. Call $M$ the set of these $m$-vertices for the variables and the literals. Call the resulting graph $G$. Let the terminal pair collection $T = \{(l^v_1,r^v_1),\ldots,(l^v_n,r^v_n),(l^c_1,r^c_1), \ldots, (l^c_p,r^c_p)\}$. The construction is illustrated in Figure~\ref{fig:flex-idp-construct}. We claim $(G,T)$ is a yes-instance if and only if $\Phi$ is satisfiable.

First suppose that $(G,T)$ is a yes-instance. Let $P^v_1,\ldots,P^v_n,P^c_1,\ldots,P^c_p$ be a solution for the paths between $(l^v_1,r^v_1),\ldots,(l^v_n,r^v_n),(l^c_1,r^c_1), \ldots, (l^c_p,r^c_p)$ respectively. For $1 \leq i \leq n$, since terminal $l^v_i$ is adjacent to other terminals and to $m^t_i$ and $m^f_i$, we know that $P^v_i$ contains one of $m^t_i$ and $m^f_i$ immediately after $l^v_i$. Since $m^t_i$ and $m^f_i$ are adjacent to $r^v_i$, we may assume without loss of generality that $P^v_i$ then continues directly to $r^v_i$. We create a truth assignment $\sigma$ where we set $v_i$ to true if and only if $P^v_i$ contains $m^t_i$. Similarly, we can argue that $P^c_j$ goes from $l^c_j$ to a vertex $m^\ell_j$ for some literal $\ell$ in $c_j$, and then continues to $r^c_j$. If $\ell$ is the negation of $v_i$, then $m^\ell_j$ is adjacent to $m^t_i$. Hence, $m^t_i$ is not in $P^v_i$ and thus the clause is satisfied by $\sigma$. Otherwise, if $\ell$ is $v_i$, then $m^\ell_j$ is adjacent to $m^f_i$. Hence, $m^f_i$ is not in $P^v_i$ and thus the clause is satisfied by $\sigma$. It follows that each clause is satisfied by $\sigma$ and thus $\Phi$ is satisfiable.

Now suppose that $\Phi$ is satisfiable. Let $\sigma$ be a truth assignment that satisfies every clause of $\Phi$. For each variable $i$, let $P^v_i$ be the path from $l^v_i$ to $r^v_i$ that goes via $m^t_i$ if $\sigma(i)$ is set to true and goes via $m^f_i$ otherwise. For each clause $1 \leq j \leq p$, let $P^c_j$ be the path from $l^c_i$ to $r^c_i$ that goes via $m^\ell_j$, where $\ell$ is any literal in $c_j$ that is satisfied by $\sigma$. Observe that when $\ell$ is satisfied, then if $\ell$ is the negation of $v_i$, then $m^\ell_j$ is adjacent to $m^t_i$ but $m^t_i$ is not in $P^v_i$. Similarly, if $\ell$ is $v_i$, then $m^\ell_j$ is adjacent to $m^f_i$ but $m^f_i$ is not in $P^v_i$. It follows that $P^v_1,\ldots,P^v_n,P^c_1,\ldots,P^c_p$ is a set of flexibly mutually induced paths. Hence, $(G,T)$ is a yes-instance.

It remains to argue that $G$ is $P_{14}$-free. Consider a longest induced path $P$ in $G$. Since both $L$ and $R$ are cliques, $P$ contains at most two vertices of $L$ and at most two vertices of $R$, and if $P$ contains two vertices of $L$ (or $R$), then these must appear consecutively. We also note that the vertices in $M$ corresponding to literals have degree~$3$, and thus when $P$ contains such a vertex $m'$, the next or previous vertex on $P$ must be in $L$ or $R$, or $m'$ is an endpoint of $P$. The vertices in $M$ corresponding to variables can have large degree; however, when $P$ contains such a vertex $m''$, the next or previous vertex on $P$ must be in $L$ or $R$ or must be a vertex in $M$ corresponding to a literal, or $P$ has length~$0$. Hence, at most three vertices in $M$ can lie consecutively on $P$ before (or after) a vertex of $L$ or $R$ must appear or an endpoint of $P$ is reached: the $m$-vertex for a literal, a variable, and a literal consecutively. Therefore, in the worst case, $P$ contains three vertices of $M$, followed by two of $L$ or $R$, followed by three of $M$, followed by two of $L$ or $R$, followed by three of $M$. Hence, $P$ has at most~$13$ vertices and thus, $G$ is $P_{14}$-free. \qed
\end{proof}

\section{Future Work}\label{s-con}

We proved a number of new complexity results on induced paths and subgraphs connecting terminals. These results
naturally lead to some open problems. First of all, can we find polynomial-time algorithms for the quasipolynomial cases in Theorem~\ref{thm:IDPnew}? This is a challenging task that is also open for {\sc Independent Set}; note that we reduce to the latter problem in our proof for the case where $H=sP_1+P_6$ for some $s\geq 0$. Interesting open cases are when $H\in \{2P_4,P_7\}$.

We also recall that  the case $H=P_6$ is essentially the only remaining open case left in Theorem~\ref{thm:IDCS}, which is for the setting where $k$ and $\ell$ are both part of the input. As shown in Theorems~\ref{thm:IDPnew} and~\ref{thm:k-IDCS}, respectively, we have a positive answer for the settings where $\ell$ is fixed (and $k$ is part of the input) and where $k$ is fixed (and $\ell$ is part of the input), respectively. However, it seems challenging to combine the techniques used for proving these results for $H=P_6$ when both $k$ and $\ell$ are part of the input.

We did not yet discuss the {\sc $k$-Induced Disjoint Connected $\ell$-Subgraphs} problem, which is the variant where both $k$ and $\ell$ are fixed; note that if $\ell=2$, then we obtain the {\sc $k$-Induced Disjoint Paths} problem. The latter problem restricted to $k=2$ is closely related to the problem of deciding if a graph contains a cycle passing through two specified vertices and has been studied for hereditary graph classes as well; see~\cite{LLMT09}.
Recently, we made some more progress on {\sc $k$-Induced Disjoint Paths}, as we discuss below.

A {\it subdivided claw} is obtained from a claw after subdividing each edge zero or more times. In particular, the {\it chair} is the graph obtained from the claw by subdividing one of its edges exactly once. The set ${\cal S}$ consists of all graphs with the property that each of their connected components is either a path or a subdivided claw. We proved in~\cite{MPSV22b} that for every integer $k\geq 2$ and graph $H$, {\sc $k$-Induced Disjoint Paths} is polynomial-time solvable if $H$ is a subgraph of the disjoint union of a linear forest and a chair, and it is \NP-complete if $H$ is not in ${\cal S}$. 

From the above it follows in particular that $k$-{\sc Induced Disjoint Paths} is polynomial-time solvable for claw-free graphs (just like {\sc Independent Set}~\cite{Mi80,Sh80}). This is in contrast to the three problems in this paper, which are \NP-complete for claw-free graphs (see Theorems~\ref{thm:IDPnew}--\ref{thm:k-IDCS}).
We leave completing the classification of {\sc $k$-Induced Disjoint Paths} as future work and refer to~\cite{MPSV22b} for a more in-depth discussion. 

We also leave classifying {\sc Flexibly Induced Disjoint Paths} to future~research; recall that Theorem~\ref{thm:IDPnew} is unlikely to hold for this problem.

\medskip
\noindent
{\it Acknowledgments.} We thank Pawe{\l} Rz{\k a}\.{z}ewski for the argument using blob graphs, which simplified two of our proofs and led to the case $H=P_6$ in Theorem~\ref{thm:IDPnew}.

\end{document}